\newcommand{\abs}[1]{\left|#1\right|}
\newcommand{\N}{\mathbb{N}}
\newcommand{\Z}{\mathbb{Z}}
\newcommand{\R}{\mathbb{R}}
\newcommand{\C}{\mathbb{C}}
\newcommand{\F}{\mathbb{F}}
\newcommand{\CC}{\mathcal{C}}
\newcommand{\sh}{\mathcal{S}}
\newcommand{\Hom}{\mathrm{Hom}}
\newcommand{\Image}{\mathrm{Im}}
\newcommand{\Id}{\mathrm{id}}
\newcommand{\col}{\mathrm{Col}}
\newcommand{\Para}{\mathrm{Para}}
\newcommand{\SL}{\mathrm{SL}}
\newcommand{\GL}{\mathrm{GL}}
\newcommand{\cov}{\mathrm{cov}}
\newcommand{\DW}{\mathrm{DW}}
\newcommand{\restr}[2]{{
  \left.\kern-\nulldelimiterspace
  #1
  \vphantom{\big|}
  \right|_{#2}
  }}
\theoremstyle{plain}
\newtheorem{Theorem}{Theorem}[section]
\newtheorem{Proposition}[Theorem]{Proposition}
\newtheorem{Lemma}[Theorem]{Lemma}
\newtheorem{Corollary}[Theorem]{Corollary}
\theoremstyle{definition}
\newtheorem{Definition}[Theorem]{Definition}
\newtheorem{Remark}[Theorem]{Remark}
\begin{document}

\title{Parabolic Dijkgraaf-Witten invariants of links in the 3-sphere}
\author{
	Koki Yanagida 
	\thanks{
	\text{Department of Mathematics, Tokyo Institute of Technology, 2-12-1 Ookayama, Meguro-ku, Tokyo 152-8551, JAPAN}
		\text{\quad \quad e-mail:\texttt{yanagida.k.ab@m.titech.ac.jp}
		}
	}
}

\sloppy

\date{\empty}

\maketitle

%\tableofcontents	

%\setlength{\baselineskip}{30pt}

\begin{abstract}
We define a new invariant of links in the $3$-sphere and call it the parabolic Dijkgraaf-Witten (DW) invariant. This invariant is a generalization of the reduced DW invariant derived by Karuo. In this paper, we compute the invariant of several links over which double branched coverings are homeomorphic to the lens spaces. Moreover, we introduce a procedure for computing partial information of the parabolic DW invariant using only link diagrams.
\end{abstract}

\begin{center}
\normalsize
\baselineskip=11pt
{\bf Keywords} \\
Dijkgraaf--Witten invariant, link invariant \ \ \
\end{center}
\begin{center}
\normalsize
\baselineskip=11pt
{\bf Subject Code } \\
\ \ 	57K10
\end{center}
\large 

\section{Introduction}
For a finite group $G$, the Dijkgraaf-Witten (DW) invariant \cite{DW} is a topological invariant of closed 3-manifolds, and is simply defined as an element in the group ring $\Z [ H_3 (G) ]$.
Here, $H_3 (G)$ is the group homology of $G$.
Dijkgraaf and Witten established a method of computing the invariant from a triangulation of $M$. 
However, when $G$ is non-abelian, it is not easy to compute the DW invariant because it is generally difficult to determine the group structure of $H_3 (G)$ explicitly. 

There have been several studies on generalizations of the DW invariant applicable to compact oriented $3$-manifolds with boundaries \cite{Kim18, Wakui}.
%For a compact oriented $3$-manifold $M$, we can define the DW invariants similarly to the calculation method via triangulation.
%However, the DW invariant of this definition depends on triangulations of $\partial M$; that is, it becomes no longer a topological invariant \cite{Wakui}.
In one such study, for the case in which $G=\SL_2 (\C)$, Zickert \cite{Zic} introduced the relative group homology and defined an invariant of 3-manifolds with boundaries.
Analogously, when $G=\SL_2(\F_p)$ for some prime $p$, Karuo \cite{Kar212, Kar1} defined a reduction of the DW invariant as an invariant of knots in $S^3$.
He suggested a method to compute the reduced DW invariants via ideal triangulations of knot complements; however, to ensure that Karuo's approach works, many aspects of the method must be confirmed.

In this paper, we focus on the case $G=\SL_2 (\F_q)$, and define the parabolic DW invariant as an invariant of links.
Here, $q$ is a power of a prime $p$.
The point is that the parabolic DW invariant is defined as an analogue of the DW invariant of $B_L^p$, where $B_L^p$ is the $p$-fold cyclic covering space of $S^3$ branched over a link $L$.
Although the definition of the parabolic DW invariant is relatively simple, it turns out to be a generalization of the above reduced DW invariant defined by Karuo (Proposition \ref{prop8}).
Moreover, we compute the parabolic DW invariants of several links whose double branched covering spaces are homeomorphic to some lens spaces (Theorem \ref{hukugen}).
As an application, we compute parabolic DW invariants of the $(2,2m)$-torus link and the $2m$-twist knot for all $m \in \N$. 
Appendix \ref{app2} contains the computations for $1 \leq m \leq 50$. (Tables \ref{paraDW_of_torus} and \ref{paraDW_of_twist}).
%In the previous studies, there are many calculations of the DW invariants with some abelian groups, and there are few computations with non-abelian groups.

Moreover, using quandle theory \cite{Nos17} and techniques relating to branched coverings, we establish a procedure for recovering the reduced DW invariant from the parabolic DW invariant (see Theorem \ref{prop888} and Proposition \ref{redDW}).
This procedure is based on some quandle colorings of a link diagram. 
Therefore, compared with Karuo's computation \cite{Kar212, Kar1} via ideal triangulations,  our procedure allows easier computations of the reduced DW invariant.
As a corollary, when $7 \leq p \leq 37$, we compute the reduced DW invariants of the prime knots with seven or fewer crossings (Table \ref{thepairing}).

The remainder of this paper is organized as follows.
Section \ref{sec2} examines the parabolic DW invariant.
In Section \ref{ss5}, we introduce a computation of the reduced DW invariant with quandle cocycle invariants and present some examples of this computation.
In Appendix \ref{app1}, we present another definition of the parabolic DW invariant and show that it is a generalization of the reduced DW invariant \cite{Kar212, Kar1}.
Appendix \ref{app2} provides specific values of the parabolic DW invariants for the $(2,2m)$-torus link and the $2m$-twist knot with $1 \leq m \leq 50$.

\noindent{\bf Conventional terminology.}
Throughout this paper, we fix a prime $p \in \N$ and denote the field of order $q$ as $\F_{q}$, where $q$ is a power of $p$.
We do not specify the coefficient group when it is $\Z$.
%we often use the homology only with integral coefficients, so we omit writing the coefficients $\Z$.

\section{Parabolic Dijkgraaf-Witten invariant of links}\label{sec2}
In this section, we define the parabolic DW invariant (Definition \ref{maindef}) and prove some properties;
we establish a decomposition of the parabolic DW invariant (Theorem \ref{hukugen}),
and demonstrate an invariance property of the parabolic DW invariant (Lemma \ref{concor} and Corollary \ref{periodicity}).

\subsection{Definition of parabolic Dijkgraaf-Witten invariant}
Before defining the parabolic DW invariant, we begin by reviewing the DW invariant \cite{DW}.
Let $M$ be an oriented closed 3-manifold, and let $[M] \in H_3(M;\Z)$ denote the fundamental class of $M$.
For a group $G$, let $BG$ be an Eilenberg-MacLane space of type $(G,1)$.
Fix a classifying map $\iota: M \to B\pi_1(M)$.
For a group homomorphism $f : \pi_1(M) \to G$, we write $f_*: H_3(M) \to H_3(BG)$ for the composite map $ (Bf)_* \circ \iota_*$.
Here, $Bf : B\pi_1(M) \to BG$ is a continuous map induced by $f$.
Note that $f_*$ is well-defined since $Bf$ and $\iota$ are unique up to homotopy.
Then, the {\it DW invariant} of $M$ \cite{DW} is defined to be the formal sum
\begin{equation}\label{oriDW}
\mathrm{DW}(M) \coloneqq \sum_{f \in \Hom(\pi_1(M),G)} 1_{\Z} \, f_*( [ M]) \in \Z [H_3(BG)] ,
\end{equation}
where $ \Z [H_3(BG)] $ is the group ring of $H_3(BG) $ over $\Z$ and $1_\Z$ is the unit of the group ring.
It is well-known that the group homology of $G$ is isomorphic to the ordinary homology of $BG$.
As stated in the introduction, it is not easy to generalize the DW invariant to $3$-manifolds with boundaries (see, e.g., \cite{Wakui}).

Next, we define the parabolic DW invariant.
We consider the case where $G= \mathrm{SL}_2 (\F_q) $, and discuss link complements with toroidal boundaries.
Let $L \subset S^3$ be a link and $f : \pi_1(S^3 \setminus L ) \to \mathrm{SL}_2 (\F_q)$ be a representation.
The representation $f$ is called {\it parabolic} if the image of each meridian of $L$ under $f$ is conjugate to $ \begin{pmatrix}
1 & * \\
0 & 1 \\
\end{pmatrix} $ for some $* \in \F_q^{\times}$.
Write $\Para(L, q)$ for the set of parabolic representations. 
That is,
\[
\Para(L, q) \coloneqq
\{ f \in \Hom (\pi_1(S^3 \setminus L ),\SL_2 (\F_q)) \mid f \text{ is parabolic} \}.
\]

Let $\cov : E_L^p \to S^3 \setminus L$ be the $p$-fold cyclic covering, and
let $B_L^p \to S^3$ be the $p$-fold cyclic covering of $S^3$ branched over $L$.
Since the induced map $\cov_* : \pi_1 (E_L^p) \to \pi_1 (S^3 \setminus L)$ is injective, we regard $\pi_1 (E_L^p)$ as a subgroup of $\pi_1 (S^3 \setminus L)$.
Now, consider a parabolic representation $f \in \mathrm{Para}(L, q)$.
From the definition of the parabolic representation, it follows that the restriction $\restr{f}{\pi_1(E_L^p)}$ sends each meridian to the identity matrix.
Thus, $\restr{f}{\pi_1(E_L^p)}$ induces $ \bar{f}: \pi_1(B_L^p) \to \mathrm{SL}_2 (\F_q) $ such that the following diagram commutes:
\[\raisebox{-0.5\height}{\includegraphics{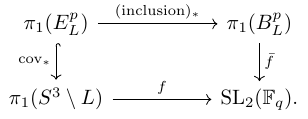}}\]

\begin{Definition}\label{maindef}
Let $L \subset S^3$ be a link.
We define the \emph{parabolic DW invariant} of $L$ to be the formal sum
\begin{equation}\label{def1}
\mathrm{DW}_q(L) \coloneqq \sum_{f \in \Para(L,q)} 1_{\Z} \, \bar{f}_*( [ B_L^p]) \in \Z [H_3(B\mathrm{SL}_2 (\F_q))] .
\end{equation}
\end{Definition}

%We often call $\mathrm{DW}_q(L)$ \emph{the parabolic DW invariant} for short.
By definition, $\mathrm{DW}_q$ is a link invariant; we will show that this invariant is a generalization of the reduced DW invariant of Karuo \cite{Kar212, Kar1} in Proposition \ref{prop8}.

\begin{Remark}\label{keyrem}
\begin{enumerate}
\item A classical result says that $H_3(B\SL_2 (\F_q) ; \Z [1/p]) \cong \Z/(q^2-1)$; see, e.g., \cite{Hut13}.
Furthermore, if $q \neq 2, 2^2, 2^3, 3, 3^2, 3^3 ,5$, then the $p$-torsion of $H_3(B\mathrm{SL}_2(\F_q) )$ vanishes.
In this paper, we say that a prime power $q$ is \emph{generic} if $q \neq 2, 2^2, 2^3, 3, 3^2, 3^3 ,5$.

\item If two parabolic representations $f,f': \pi_1(S^3 \setminus L ) \to \mathrm{SL}_2 (\F_q) $ are conjugate,
then $ \bar{f}_*( [ B_L^p]) = \bar{f'}_*( [ B_L^p]) $ since the homomorphism induced by conjugation maps on $\mathrm{SL}_2 (\F_q)$ are the identity maps on homology.

\item For $i \in \{ 1,2 \}$, let $M_i$ be a closed oriented 3-manifold and let $f_i : \pi_1(M_i) \to G$ be a homomorphism.
Consider an oriented connected compact 4-manifold $W$ and a homomorphism $F: \pi_1(W) \to G $ such that $ \partial W= M_1 \sqcup -M_2$ and $f_j =F\circ (\iota_j)_*$.
Here, $\iota_j : M_j \hookrightarrow W$ is the inclusion.
By the long homology exact sequences from $(W, M_1 \sqcup M_2)$, one can show that $(f_1)_* ([M_1]) = (f_2)_* ([M_2]) \in H_3(G)$.
\end{enumerate}
\end{Remark}

\subsection{Computation of partial sum with $p=2$}\label{commet}
In general, it is difficult to evaluate the parabolic DW invariant because the isomorphism $H_3(B\mathrm{SL}_2 (\F_q)) \cong \Z/(q^2-1)$ in Remark \ref{keyrem} is not canonical for every generic prime power $q$.
In this subsection, 
we consider the case where the branched covering space $B_L^p$ is homeomorphic to a lens space.
In this setting, we obtain a decomposition of the parabolic DW invariant into the sum of computable invariants (Theorem \ref{hukugen}).

To state Theorem \ref{hukugen}, we begin by recalling cyclic subgroups $T, K, U \subset \SL_2 (\F_q)$ from \cite{Hut13}.
$T$ is defined as the subgroup consisting of diagonal matrices in $\SL_2 (\F_q)$, which is isomorphic to $\Z / (q-1)$.
By an additive isomorphism $\kappa : \F_{q^2} \cong \F_{q} \oplus \F_q$, we obtain the canonical homomorphism 
\begin{equation}\label{can}
\mu : \F_{q^2}^{\times} \longrightarrow \GL_2(\F_q); \quad a \longmapsto \mu_a,
\end{equation}
where $\mu_a (x) \coloneqq \kappa (a \kappa^{-1} (x))$ for $x \in \F_{q} \oplus \F_q$.
We define $K\subset\SL_2(\F_q)$ by $K \coloneqq \Image \mu \cap \SL_2 (\F_q)$, which is isomorphic to $\Z/(q+1)$.
Let $U = \left\{ \begin{pmatrix} 1 & x \\ 0 & 1 \end{pmatrix} \in \SL_2(\F_q) \middle| x \in \F_q \right\}$, of which order is $q$.

For a subgroup $H \subset \SL_2 (\F_q)$, we define a subset $P_H \subset \Para(L,q)$ by 
\begin{equation*}
P_H \coloneqq \left\{ f \in \Para(L,q) \mid \Image \bar{f} \neq \{ 1 \} \text{ and } \Image \bar{f} \subset g^{-1} H g \text{ for some }g \in \SL_2 (\F_q) \right\}. \\
\end{equation*}
%If $f \in P_H$, that is, $\Image \bar{f} \subset g^{-1} H g \text{ for some }g$,
%the pushforward $\bar{f}_* ([B_L^p])$ lies in $H_3 (g^{-1} H g)$.
%By $H_3( g^{-1} T g) \cong H_3(T)$ with conjugation, we identify $H_3( g^{-1} T g)$ with $H_3(T)$.
%Therefore, 
In addition, we define $\mathrm{DW}^H_q(L)$ by a partial sum of \eqref{def1},
\[ \mathrm{DW}^H_q(L) \coloneqq \sum_{f \in P_H} 1_{\Z} \, \bar{f}_*( [ B_L^p ]) \in \Z [H_3( \SL_2 (\F_q))]. \]

Let us consider the situation in which $B_L^2$ is homeomorphic to $L(m,n)$, where $m$ and $n$ are relatively prime integers and $L(m,n)$ is the lens space.
%Denote by $\mathrm{res}_H : H_3(H) \to H_3(\SL_2(\F_q))$ the induced map from the inclusion $H \subset \SL_2(\F_q)$.

\begin{Theorem}\label{hukugen}
Let $p=2$, $q \geq 16$ and $P_{\Id} \coloneqq \left\{ f \in \Para(L,q) \mid \Image \bar{f} = \{ I_2 \} \right\}$,  where $I_2 \in \SL_2 (\F_q)$ is the identity matrix.
Suppose that the double branched covering space $B_L^2$ is homeomorphic to a lens space $L(m,n)$ for some $m, \, n \in \N$.
Then,
\[
\DW_q (L)=
\mathrm{DW}^T_q(L)+
\mathrm{DW}^K_q(L)+
(\abs{P_U}+\abs{P_{\Id}}) \, 1_{\Z}.
\]
\end{Theorem}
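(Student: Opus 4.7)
The strategy is to use $B_L^2 \cong L(m,n)$ to force every $\Image \bar f$ to be a \emph{cyclic} subgroup of $\SL_2(\F_q)$, partition $\Para(L,q)$ according to the classical types of such subgroups, and then exploit characteristic-$2$ arithmetic together with Remark \ref{keyrem}(1) to collapse the $U$- and $\Id$-parts of the sum.

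For the partition: since $\pi_1(B_L^2) \cong \pi_1(L(m,n)) \cong \Z/m$, each $\Image \bar f$ is cyclic. By the classical classification of elements of $\SL_2(\F_q)$, a non-identity element is either semisimple with distinct eigenvalues in $\F_q^{\times}$ (hence conjugate into $T$), semisimple with eigenvalues in $\F_{q^2} \setminus \F_q$ (hence conjugate into $K$), or unipotent (hence conjugate into $U$). Because $q$ is a power of $p = 2$, the orders $|T| = q-1$, $|K| = q+1$, $|U| = q$ are pairwise coprime: both $q-1$ and $q+1$ are odd and differ by $2$, and each is coprime to $q$. Hence a nontrivial cyclic subgroup is conjugate into exactly one of $T$, $K$, $U$, which yields the disjoint decomposition
\[
\Para(L,q) = P_{\Id} \sqcup P_T \sqcup P_K \sqcup P_U.
\]

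It remains to check that every $f \in P_U \cup P_{\Id}$ contributes $1_{\Z}$ to \eqref{def1}. For $f \in P_{\Id}$ this is immediate. For $f \in P_U$, after conjugation (which does not change $\bar f_*([B_L^2])$ by Remark \ref{keyrem}(2)) I may assume $\Image \bar f \subset U$. Then $\bar f$ factors as $\pi_1(B_L^2) \to U \hookrightarrow \SL_2(\F_q)$, so $\bar f_*([B_L^2])$ lies in the image of $H_3(BU) \to H_3(B\SL_2(\F_q))$. But $U \cong (\F_q, +)$ is an elementary abelian $2$-group, so $H_3(BU)$ is entirely $2$-torsion, whereas the hypothesis $q \geq 16$ makes $q$ generic, and by Remark \ref{keyrem}(1), $H_3(B\SL_2(\F_q)) \cong \Z/(q^2-1)$ has odd order. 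The induced map is therefore zero, so $\bar f_*([B_L^2])$ equals the identity of $H_3$. Summing over the four parts of the partition gives exactly the stated formula.

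The main subtlety is that the decomposition is genuinely characteristic-$2$. In odd characteristic the central element $-I$ would give a nontrivial cyclic subgroup common to $T$ and $K$, spoiling the disjointness of $P_T$ and $P_K$; and for exceptional $q$ the $2$-primary part of $H_3(B\SL_2(\F_q))$ might be nonzero, blocking the vanishing for $P_U$. The hypotheses $p=2$ and $q \geq 16$ are precisely what secure the partition and the $H_3$-vanishing respectively.
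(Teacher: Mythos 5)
Your proposal is correct and follows essentially the same route as the paper: both establish the partition $\Para(L,q) = P_{\Id} \sqcup P_T \sqcup P_K \sqcup P_U$ from the cyclicity of $\pi_1(L(m,n))$ together with the classification of elements of $\SL_2(\F_q)$ in characteristic $2$, and both kill the $P_U$ and $P_{\Id}$ contributions because $H_3(BU)$ is $2$-torsion while $H_3(B\SL_2(\F_q))$ has no $2$-torsion for generic $q$. The only cosmetic difference is that the paper argues the trichotomy via characteristic polynomials and exhibits an explicit conjugating matrix into $K$, whereas you invoke the standard classification and the pairwise coprimality of $|T|$, $|K|$, $|U|$.
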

\begin{proof}
We first show that 
\begin{equation}\label{pf2.3}
\Para (L,q) = P_{\Id} \sqcup P_T \sqcup P_K \sqcup P_U.
\end{equation}
Note that it follows immediately from definition that $P_{\Id} \cap P_T = P_{\Id} \cap P_K = P_{\Id} \cap P_U = \emptyset$.
Suppose that $\rho \in \Para (L,q) \setminus P_{\Id}$, and
let
$g \coloneqq
\begin{pmatrix}
a & b \\
c & d
\end{pmatrix}
\in \SL_2 (\F_q) \setminus \{ I_2\}$
be the image of a generator of $\pi_1 (L(m,n)) \cong \Z/m$ under $\bar{\rho}$.
Let $p_g(t)$ denote the characteristic polynomial of $g$, namely $\det(tI_2 - g)$.
Then, $p_g(t)$ is either irreducible or equal to $(t-\alpha)(t-\alpha^{-1})$ for some $\alpha \in \F_q^{\times}$.
Hence, in order to derive (\ref{pf2.3}), it is enough to show the following three claims:

\begin{description}
\item[{\bf Claim (i):} $p_g(t)=(t-\alpha)(t-\alpha^{-1})$ for some $\alpha \neq 1$ if and only if $\rho \in P_T$.]\mbox{}\\
{\it Proof of Claim (i).}
The ``only if'' part is obvious.
Conversely, suppose that $p_g(t)=(t-\alpha)(t-\alpha^{-1})$ for some $\alpha \neq 1$.
Then, $g$ is conjugate to
$\begin{pmatrix}
\alpha & 0 \\
0 & \alpha^{-1}
\end{pmatrix}$.
Thus, $\rho \in P_T$.

\item[{\bf Claim (ii):} $p_g(t)=(t-1)^2$ if and only if $\rho \in P_U$.]\mbox{}\\
{\it Proof of Claim (ii).}
The ``only if'' part is obvious.
Conversely, suppose that $p_g(t)=(t-1)^2$.
Because $\overline{\rho}$ is non trivial, $g$ must have the Jordan normal form
$\begin{pmatrix}
1 & 1 \\
0 & 1
\end{pmatrix},$ which implies that $\rho \in P_U$.

\item[{\bf Claim (iii):} $p_g(t)$ is irreducible if and only if $\rho \in P_K$.]\mbox{}\\
{\it Proof of Claim (iii).}
We first show ``if'' part.
Suppose that $p_g(t)$ is irreducible.
Then, either $b$ or $c$ must be non zero.
Under an identification $\F_{q^2} \cong \F_q[X] / (p_{g}(X))$, direct computations show
$\mu_X =
\begin{pmatrix}
0 & -1 \\
1 & a+d
\end{pmatrix} 
\in \SL_2 (\F_q)$.
Here, $\mu : \F_q[X] / (p_{g}(X)) \to \GL_2 (\F_q)$ is the canonical map in (\ref{can}).
Define $h \in \SL_2 (\F_q)$ by
\begin{equation*}
h =
\left\{
\begin{array}{ll}
\left(
\begin{array}{cc}
\sqrt{c} & d \sqrt{c}^{-1} \\
(a + d) \sqrt{c} & b \sqrt{c} + d^2 \sqrt{c}^{-1} \\
\end{array}
\right), &\text{if } c \neq 0, \\ \\
\left(
\begin{array}{cc}
a \sqrt{b}^{-1} & \sqrt{b} \\
a^2 \sqrt{b}^{-1} + c \sqrt{b} & (a+d) \sqrt{b} \\
\end{array}
\right), &\text{if } c = 0.
\end{array}
\right.
\end{equation*}
Here, $\sqrt{\bullet} : \F_q \to \F_q$ is the inverse map of the Frobenius endomorphism $\F_q \to \F_q$, which sends each element to its square.
Then, $h g h^{-1} = \mu_X$, which leads to $\rho \in P_K$.

Conversely, suppose that $\rho \in P_K$ and $p_g(t)$ is reducible.
Then, from $\rho \in P_K$, it follows that the order of $g$ is a divisor of $p+1$.
In addtion, since $p_g(t)$ is reducible, Claim (i) and (ii) implies the order of $g$ is also a divisor of $p$ or $p-1$.
Because $p+1$ is relatively prime to $(p-1)p$, we have $g = I_2$.
This contradicts the assumption that $\rho \notin P_{\Id}$; hence, $p_g(t)$ is irreducible if $\rho \in P_K$.
\end{description}
Hence, (\ref{pf2.3}) holds.
Therefore,
\allowdisplaybreaks
\begin{align*}
\mathrm{DW}_q(L)
&=	\sum_{f \in \Para(L,q)} 1_{\Z} \, \bar{f}_*( [ B_L^p]) \\
&=	\sum_{f \in P_T} 1_{\Z} \, \bar{f}_*( [ B_L^p])
+\sum_{f \in P_K} 1_{\Z} \, \bar{f}_*( [ B_L^p])
+\sum_{f \in P_U} 1_{\Z} \, \bar{f}_*( [ B_L^p])
+\sum_{f \in P_{\Id}} 1_{\Z} \, \bar{f}_*( [ B_L^p]) \\
&=	 \mathrm{DW}^T_q(L) 
+ \mathrm{DW}^K_q(L) 
+(\abs{P_U}+\abs{P_{\Id}}) \, 1_{\Z},
\end{align*}
where the third equality is obtained by $H_3 (U) \cong \Z/q$ and $H_3 (\SL_2 (\F_q) ; \Z_2) \cong 0$ \cite{Hut13}.
\end{proof}
Theorem \ref{hukugen} implies that, in order to compute $\DW_q (L)$ for $q=2^j$ with $j \geq 4$, it suffices to compute $\DW^T_q (L)$ and $\DW^K_q (L)$.
For the remainder of this subsection, we assume $H = T$ or $K$, and discuss a computational method of $\DW^H_q (L)$.

For $f \in P_H$, it follows from Remark \ref{keyrem} that $\overline{f}_* ([B_L^2]) \in H_3(H)\subset H_3(\SL_2(\F_p))$.
Therefore, we may regard $\DW^H_q (L)$ as an element in $\Z[H_3(H)]$, and need only consider the pushforward $\overline{f}_* ([B_L^2]) = (B\overline{f})_* \circ \iota_* ([B_L^2])$ by $\overline{f} : \pi_1(B_L^2) \cong \Z/m \to H$.
We first observe $\iota: B_L^2 \to B\Z/m$.
It is well-known that $B\Z/m$ is an infinite lens space $L^\infty (m)$ of which $3$-skeleton is isomorphic to $L(m,n)$.
Furthermore, we can take $\iota$ to be the inclusion map $B_L^2 \cong L(m,n) \to B\Z/m$.
Hence, we may assume that $\iota_*: H_3(B_L^2) \to H_3(B\Z/m)$ coincides with the quotient map $\Z \to \Z/m$.
In addition, one can directly compute $Bf_* : H_3(\Z/m) \to H_3(H)$ from the definition of the group homology.

In conclusion, we can easily compute $\DW^H_q (L)$.
In fact, Tables \ref{paraDW_of_torus} and \ref{paraDW_of_twist} in Appendix B present two specific examples.

\subsection{Invariance of parabolic Dijkgraaf-Witten invariant}
In this subsection, we first prove that the parabolic DW invariant is invariant under a certain concordant operation:
\begin{Lemma}\label{concor}
As shown in Fig.\ref{L1L2L3}, consider three link diagrams with arcs $\alpha, \beta, \gamma, \delta, \alpha', \beta', \gamma'$ and $\delta'$.
For any parabolic representation $f \in \Para(L_1,q)$ such that $ f(\alpha)=f(\gamma)$ and $f(\beta)=f(\delta) $,
there are parabolic representations $f_j \in \Para(L_j,q)$ with $j \in \{ 2, 3 \} $ such that $(\overline{f_2})_* ( [B^p_{L_2} ]) + (\overline{f_3})_* ( [B^p_{L_3} ]) = (\overline{f})_* ( [B^p_{L_1} ])$.
\begin{figure}[htbp]
	\begin{center}
		\includegraphics[scale=0.6]{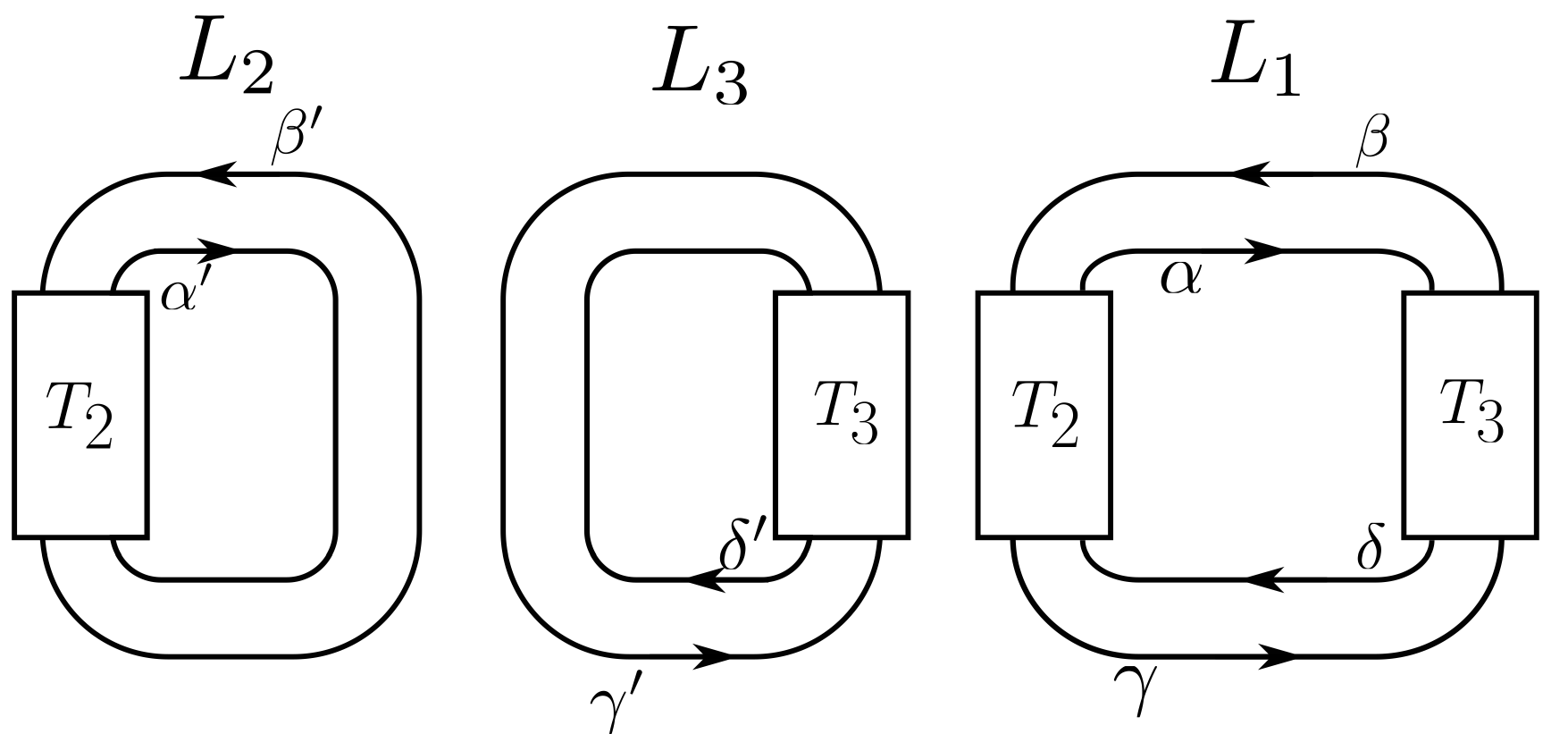}
		\caption{The links $L_1, L_2$, and $L_3$.\label{L1L2L3}}
	\end{center}
\end{figure}
\end{Lemma}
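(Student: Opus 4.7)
The plan is to construct a cobordism in $S^{3}\times[0,1]$ between $L_{1}$ and $L_{2}\sqcup L_{3}$, pass to the $p$-fold cyclic branched cover, and then invoke Remark~\ref{keyrem}(3). Concretely, the three diagrams in Fig.~\ref{L1L2L3} agree outside a small ball $B$ and differ there by a saddle-type (band) operation; consequently there exists a properly embedded, oriented surface $F\subset S^{3}\times[0,1]$ with $\partial F=(L_{1}\times\{0\})\cup((L_{2}\sqcup L_{3})\times\{1\})$, which is a product outside $B\times[0,1]$ and has a saddle singularity inside.

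Set $N:=(S^{3}\times[0,1])\setminus\nu(F)$. A Wirtinger-type analysis of the saddle, together with van Kampen, shows that $\pi_{1}(N)$ is the quotient of $\pi_{1}(S^{3}\setminus L_{1})$ by the relations $m_{\alpha}=m_{\gamma}$ and $m_{\beta}=m_{\delta}$ on meridians, and symmetrically that $\pi_{1}(N)$ is presented from $\pi_{1}(S^{3}\setminus(L_{2}\sqcup L_{3}))$ by the corresponding identifications on $\alpha',\beta',\gamma',\delta'$. The hypotheses $f(\alpha)=f(\gamma)$ and $f(\beta)=f(\delta)$ are therefore exactly the conditions needed for $f$ to factor through $\pi_{1}(N)$, producing an extension $F_{N}:\pi_{1}(N)\to\SL_{2}(\F_{q})$. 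Define $f_{j}\in\Para(L_{j},q)$ as the restriction of $F_{N}$ along $S^{3}\setminus L_{j}\hookrightarrow N$; each $f_{j}$ is automatically parabolic because every meridian of $L_{j}$ is identified in $N$ with a meridian of $L_{1}$, on which $f$ takes parabolic values. Now pass to the $p$-fold cyclic branched cover $W\to S^{3}\times[0,1]$ along $F$. Exactly as in Definition~\ref{maindef}, $F_{N}$ descends to $\overline{F_{N}}:\pi_{1}(W)\to\SL_{2}(\F_{q})$, and $\partial W=B_{L_{1}}^{p}\sqcup(-B_{L_{2}}^{p})\sqcup(-B_{L_{3}}^{p})$ with the restrictions of $\overline{F_{N}}$ to the boundary equal to $\overline{f},\overline{f_{2}},\overline{f_{3}}$.

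Applying Remark~\ref{keyrem}(3) with $M_{1}:=B_{L_{1}}^{p}$ and $M_{2}:=B_{L_{2}}^{p}\sqcup B_{L_{3}}^{p}$ (whose fundamental class is the sum of those of its components) then yields
\[
\overline{f}_{*}([B_{L_{1}}^{p}])=(\overline{f_{2}})_{*}([B_{L_{2}}^{p}])+(\overline{f_{3}})_{*}([B_{L_{3}}^{p}]),
\]
which is the claimed identity. The main obstacle will be extracting the precise combinatorics of the cobordism $F$ from Fig.~\ref{L1L2L3} and checking that the branched covering $W\to S^{3}\times[0,1]$ really produces a $4$-manifold whose boundary, with consistent orientations, is $B_{L_{1}}^{p}\sqcup(-B_{L_{2}}^{p})\sqcup(-B_{L_{3}}^{p})$; since Remark~\ref{keyrem}(3) is stated for connected $W$, one may also need to apply it componentwise if $W$ happens to be disconnected. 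The algebraic content of the argument is essentially that the two hypothesised equalities are exactly the relations introduced by the saddle, so the extension of $f$ across the band is forced.
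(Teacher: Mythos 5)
Your proposal takes essentially the same route as the paper: realize the band move as a surface cobordism, pass to the $p$-fold cyclic branched cover of the resulting $4$-manifold, extend the representation across it, and invoke Remark~\ref{keyrem}(3). The one point you flagged as needing verification does in fact need a correction: the boundary component of $W$ lying over $S^3\times\{1\}$ is the branched cover of $S^3$ over the \emph{split} link $L_2\sqcup L_3$, which is connected --- it is $B_{L_2}^p\,\sharp\,B_{L_3}^p\,\sharp\,(S^2\times S^1)^{\sharp(p-1)}$ rather than the disjoint union $B_{L_2}^p\sqcup B_{L_3}^p$; the paper's proof uses exactly this connected-sum homeomorphism. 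The conclusion survives unchanged, since pushforwards of fundamental classes add over connected summands and the $S^2\times S^1$ summands contribute trivially because $H_3(\Z)=0$.
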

\begin{proof}
Define $f_2 \in \Para(L_2,q)$ and $f_3 \in \Para(L_3,q)$ by $f(\alpha)= f_2(\alpha' )=f_3(\gamma' )$ and $f(\beta)= f_2(\beta' )=f_3(\delta' )$ canonically.
We consider the bands $\mathcal{D} \coloneqq \mathcal{D}_1 \cup \mathcal{D}_2$ that connect $L_2$ and $L_3$ (see Fig.\ref{DDL2L3}).
\begin{figure}[htbp]
\begin{center}
\includegraphics[scale=0.4]{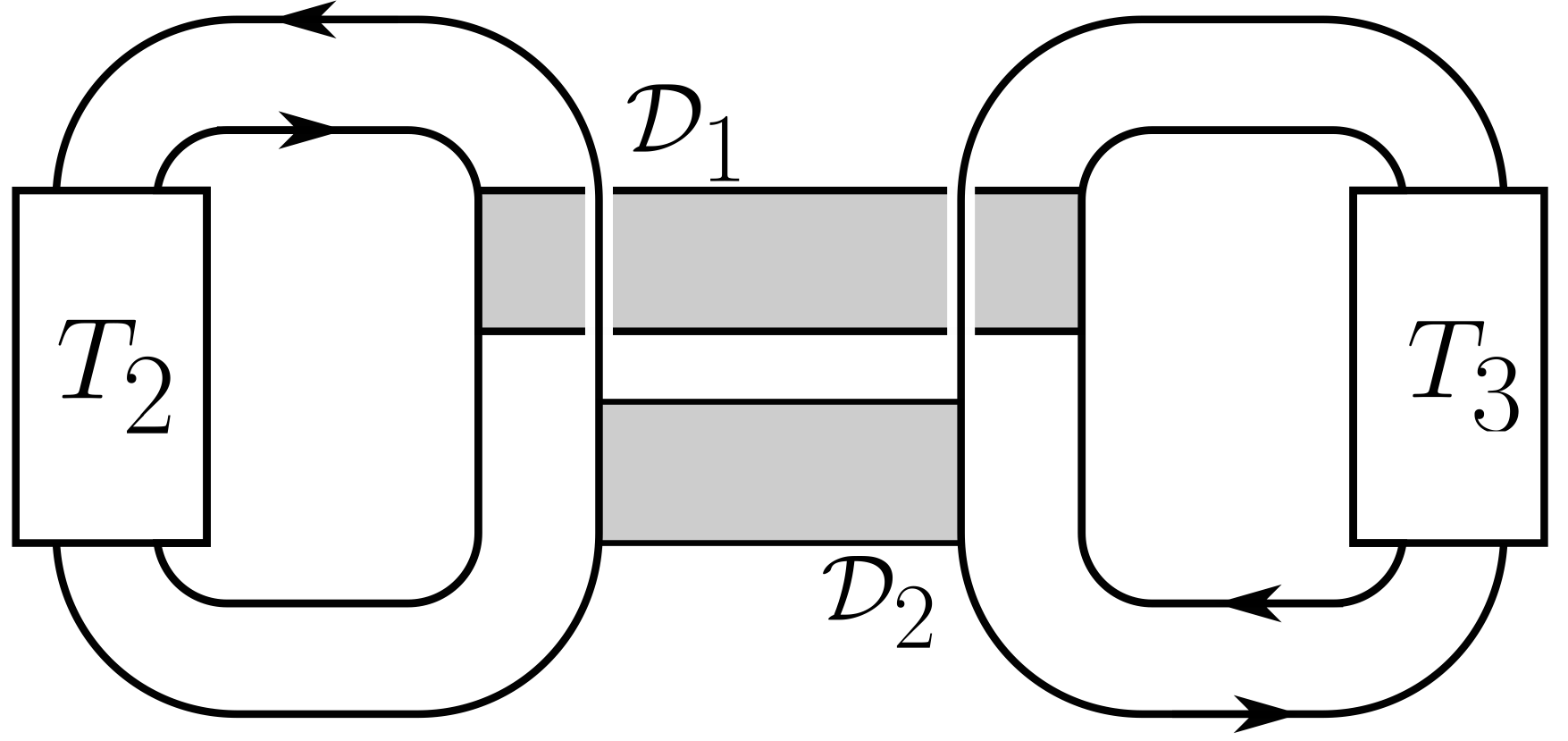}
\vspace{-10pt}
\caption{Bands $\mathcal{D}$ with $L_2$ and $L_3$.\label{DDL2L3}}
\end{center}
\end{figure}

Regard the bands as two oriented disks $\mathcal{D}$ in the 4-ball $B^4$ such that $\partial \mathcal{D} \cap S^3 =\mathcal{D} \cap (L_2 \sqcup - L_3) $.
Let $ B_\mathcal{D} \to B^4$ denote the $p$-fold cyclic covering branched over $\mathcal{D}$.
By assumption, we have a representation $F: \pi_1( B^4 \setminus \mathcal{D}) \to \mathrm{SL}_2(\F_q)$, which
induces $\bar{F}: \pi_1 ( B_\mathcal{D} ) \to \mathrm{SL}_2(\F_q)$ such that $\partial B_\mathcal{D} = B_{L_2}^p \sharp B_{L_3}^p$ and $\bar{f}_2 *\bar{f}_3 =\bar{F}\circ \iota $.
Note the homeomorphism $B_{L_2}^p \sharp B_{L_3}^p \sharp (S^2 \times S^1) \cong B_{L_2 \sqcup L_3}^p $ given by the construction of branched covering spaces.
Hence, by Remark \ref{keyrem}, we obtain the required equality as follows:
\[ \overline{(f_2)}_*( [ B_{L_2}^p]) +\overline{(f_3)}_*( [ B_{L_3}^p]) = \overline{(f_2*f_3)}_*( [ B_{L_2}^p \sharp B_{L_3}^p \sharp S^2 \times S^1 ]) = \overline{( f_2 \sqcup f_3)}_*( [ B_{ L_2 \sqcup L_3}^p]) =
\overline{f }_*( [ B_{L_1}^p]). \]
\end{proof}
For $m \in \N$, let $\mathcal{T}_m$ be the $(2,2m)$-torus link and $\mathcal{K}_m$ be the $2m$-twist knot (Fig.\ref{TmKm}).
As a corollary,
\begin{figure}[htbp].
\begin{center}
\includegraphics[width=120mm]{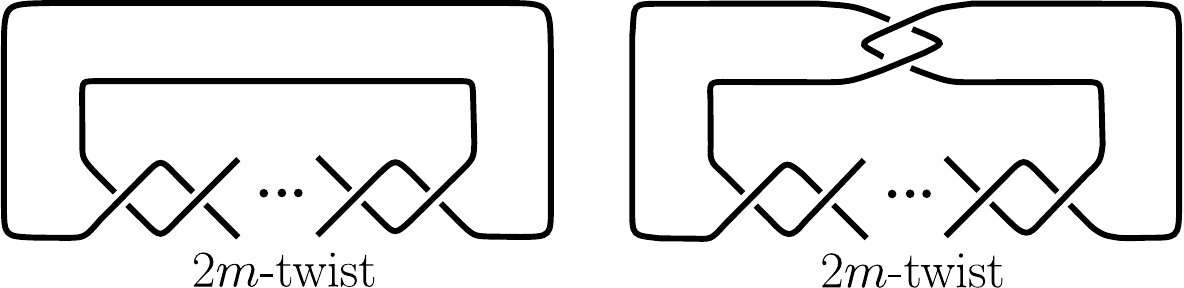}
\caption{$(2,2m)$-torus link $\mathcal{T}_m$ and $2m$-twist knot $\mathcal{K}_m$.\label{TmKm}}
\end{center}
\end{figure}
\begin{Corollary}\label{periodicity}
Let $q$ be generic and $\lambda = (q^3-q)(q^2-1)$. Then, for any $m\in\N$,
\[\DW_q (\mathcal{T}_{m+\lambda}) = \DW_q (\mathcal{T}_{m}) , \qquad \DW_q (\mathcal{K}_{m+\lambda}) = \DW_q (\mathcal{K}_{m}).\]
\end{Corollary}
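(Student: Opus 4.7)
The strategy is to apply Lemma \ref{concor} to a local region $R$ of $2\lambda$ consecutive half-twists inside the twist band of $\mathcal{T}_{m+\lambda}$, and analogously inside the twist region of $\mathcal{K}_{m+\lambda}$. The essential algebraic input is that $\lambda$ is divisible by the exponent of $\SL_2(\F_q)$: every element of $\SL_2(\F_q)$ has order dividing $p$, $q-1$, or $q+1$, and $\lambda = q(q-1)^2(q+1)^2$ is divisible by each of these (using $p \mid q$). Consequently $g^{\lambda} = I_2$ for every $g \in \SL_2(\F_q)$.

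Using this divisibility, for every $f \in \Para(\mathcal{T}_{m+\lambda}, q)$ the compatibility hypothesis of Lemma \ref{concor} holds automatically at $R$. Indeed, if $\alpha, \beta$ denote the arcs entering $R$ and $\gamma, \delta$ the arcs leaving $R$, then the Wirtinger presentation gives $f(\gamma) = w^{-1} f(\alpha) w$ and $f(\delta) = w^{-1} f(\beta) w$ with $w = (f(\alpha) f(\beta))^{\lambda} = I_2$. Applying Lemma \ref{concor} at $R$ decomposes $\bar{f}_*([B^p_{\mathcal{T}_{m+\lambda}}])$ as a sum of two terms: one is the contribution of $\mathcal{T}_m$ (the $2\lambda$ twists having been deleted) under the representation $f'$ induced by the same meridian matrices as $f$, and the other is the contribution of an auxiliary link $L_3$ produced by the saddle. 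After identifying $L_3$ with a split unlink whose branched cover contribution vanishes in $H_3(\SL_2(\F_q))$, summing over the induced bijection $\Para(\mathcal{T}_{m+\lambda}, q) \cong \Para(\mathcal{T}_m, q)$ yields the first equality. The argument for $\mathcal{K}_m$ is structurally identical with $R$ placed inside the twist region of the clasp.

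The main obstacle is the precise identification of the auxiliary link $L_3$ arising from the saddle move of Lemma \ref{concor}, together with the verification that its contribution to the sum is trivial. If the natural local move produces a nontrivial split component, for example a $(2, 2\lambda)$-torus link rather than an unlink, one will instead need to iterate Lemma \ref{concor} one full twist at a time while maintaining the compatibility hypothesis at every intermediate stage. In either case, the cancellation ultimately rests on the strong periodicity $g^{\lambda} = I_2$ rather than on any single-twist identity, which is why $\lambda$ must absorb all three types of orders $p$, $q-1$, $q+1$ appearing in $\SL_2(\F_q)$.
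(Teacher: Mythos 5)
Your overall strategy --- peel the extra $2\lambda$ half-twists off via Lemma \ref{concor} and kill the split-off piece --- is the right one, and your observation that $g^{\lambda}=I_2$ for every $g\in\SL_2(\F_q)$ does verify the hypothesis $f(\alpha)=f(\gamma)$, $f(\beta)=f(\delta)$ at the boundary of the full $2\lambda$-twist block (note that $\lambda$ is a multiple of $|\SL_2(\F_q)|=q^3-q$, so this is just Lagrange; your list of possible element orders omits the order-$2p$ elements for odd $q$, but that is harmless). The genuine gap is in the cancellation of the auxiliary piece. Closing up a region of $2\lambda$ half-twists does \emph{not} produce a split unlink: it produces the $(2,2\lambda)$-torus link $\mathcal{T}_{\lambda}$, whose double branched cover is a lens space with $\pi_1\cong\Z/2\lambda$ and whose induced representation sends a generator to the generally nontrivial element $w=f(\beta)^{-1}f(\alpha)$; the vanishing of its class in $H_3(\SL_2(\F_q))$ is therefore not automatic and is nowhere proved in your argument. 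Your fallback --- iterate Lemma \ref{concor} one full twist at a time --- fails outright: a single full twist conjugates the meridian pair by $w$, so the compatibility hypothesis of the lemma is violated at every intermediate cut unless $w=I_2$.

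The paper cuts in blocks of $q^3-q$ full twists (not one, and not all of $\lambda$ at once). Since $w^{q^3-q}=I_2$, the hypothesis of Lemma \ref{concor} holds at each of the $q^2-1$ cut points, and one splits off $q^2-1$ copies of $\mathcal{T}_{q^3-q}$, each carrying the \emph{same} representation $f'$ and hence the same class $x=(\overline{f'})_*([B^2_{\mathcal{T}_{q^3-q}}])$; the total extra contribution is $(q^2-1)x=0$ because $|H_3(\SL_2(\F_q))|=q^2-1$ for generic $q$, so no evaluation of $x$ is ever needed. Your single-cut version can be rescued, but only by actually computing the pushforward for $\mathcal{T}_{\lambda}$: the class factors through $H_3(\Z/2\lambda)\to H_3(\langle w\rangle)\to H_3(\SL_2(\F_q))$, and the image of the fundamental class of the lens space in $H_3(\langle w\rangle)$ is $(2\lambda/\mathrm{ord}(w))$ times a generator, a multiple of $q^2-1$ since $\mathrm{ord}(w)$ divides $q^3-q$; hence it dies in $H_3(\SL_2(\F_q))$. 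Either way, the divisibility by $q^2-1$ is the essential missing step, and you still owe the bijectivity of $f\mapsto\mathfrak{f}$ that you invoke for the final summation.
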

\begin{proof}
We focus on the proof for the torus links $\mathcal{T}_m$, 
because the latter claim for the twist knots $\mathcal{K}_m$ follows in the same manner as the proof of the former claim for the torus links $\mathcal{T}_m$.
%Therefore, we will focus on only the torus links $\mathcal{T}_m$.

Considering a diagram of the torus link $\mathcal{T}_m$ with arcs $Z_1, \ldots, Z_{2m+2}$, as shown in Fig.\ref{WirTm}, we obtain the following Wirtinger presentation:
\begin{equation}\label{hooh}
\pi_1 (S^3 \setminus \mathcal{T}_m) = \left\langle Z_1, \ldots, Z_{2m+2} \middle|
\begin{array}{l}
Z_{2m+1}=Z_1, \,\,\,\,\,\,\,
Z_{2m+2}=Z_2, \\
Z_{2k+1} = (Z_2^{-1} Z_1)^k Z_1 (Z_2^{-1} Z_1)^{-k}, \\
Z_{2k+2} = (Z_2^{-1} Z_1)^k Z_2 (Z_2^{-1} Z_1)^{-k}
\end{array}
\, (k=1, \ldots, m)
\right\rangle.
\end{equation}
\begin{figure}[htbp].
\begin{center}
\includegraphics[width=100mm]{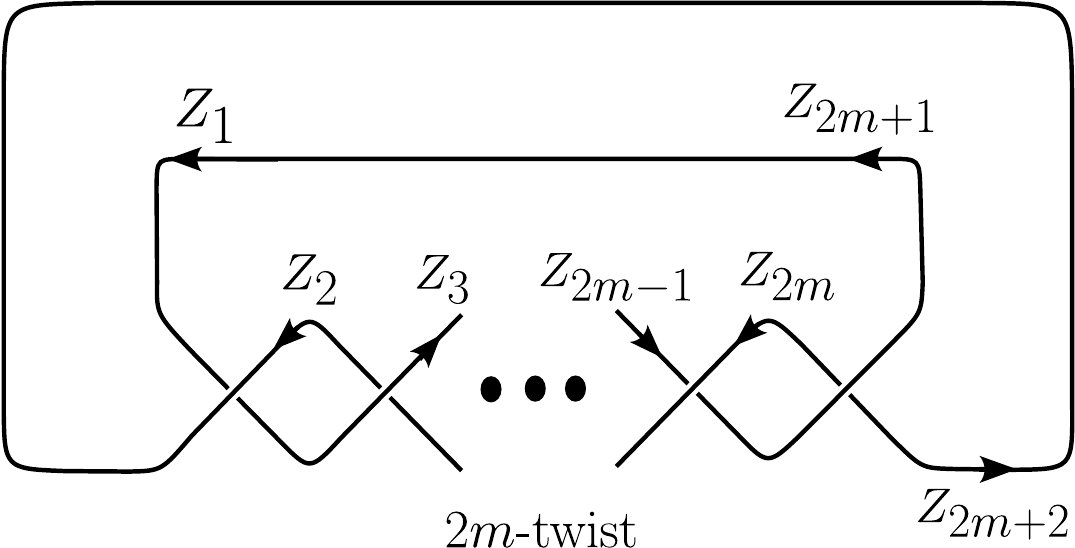}
\caption{Arcs of $\mathcal{T}_m$.}
\label{WirTm}
\end{center}
\end{figure}
If we replace $m$ in (\ref{hooh}) by $m+\lambda$, then we also get the Wirtinger presentation of $\pi_1 (S^3 \setminus \mathcal{T}_{m+\lambda})$.

Let $f : \pi_1 (S^3 \setminus \mathcal{T}_{m+\lambda})\to \SL_2 (\F_q)$ be a parabolic representation.
We have
\[
\left\{
\begin{array}{ll}
f(Z_1) &= f(Z_{2(q^3-q)+1}) = f(Z_{4(q^3-q)+1}) = \cdots = f(Z_{2\lambda+1}), \\
f(Z_2) &= f(Z_{2(q^3-q)+2}) = f(Z_{4(q^3-q)+2}) = \cdots = f(Z_{2\lambda+2 }),
\end{array}
\right.
\]
since $| \SL_2 (\F_q) | = q^3-q$. 
Thus, by repeatedly applying Lemma \ref{concor}, there are parabolic representations $\mathfrak{f} \in \Para(\mathcal{T}_m, q)$ and $f' \in \Para(\mathcal{T}_{ q^3-q }, q)$ such that 
$(\overline{f})_* ( [B^p_{\mathcal{T}_{m+\lambda}} ]) = (\overline{\mathfrak{f}})_* ( [B^p_{\mathcal{T}_m} ]) + (q^2-1) (\overline{f'})_* ( [B^p_{\mathcal{T}_{ q^3-q }} ])$.
Because $| H_3 (\SL_2 (\F_q) ; \Z)| = q^2-1$ \cite{Hut13}, it follows that
\[
(\overline{f})_* ( [B^p_{\mathcal{T}_{m+\lambda}} ]) = (\overline{\mathfrak{f}})_* ( [B^p_{\mathcal{T}_m} ]).
\]

Recalling the proof of Lemma \ref{concor}, one can easily verify that the map $\Para(\mathcal{T}_{m+\lambda}, q) \to \Para(\mathcal{T}_m, q)$, which  sends $f$ to $\mathfrak{f}$, is bijective.
Hence, $\DW_q (\mathcal{T}_{m+\lambda}) = \DW_q (\mathcal{T}_{m})$.
\end{proof}

\section{Computation from the quandle cocycle invariant}\label{ss5}
In this section, we describe a procedure for computing a reduction of the parabolic DW invariant \eqref{def1} in terms of quandle cocycle invariants.
For this, we review the cocycle invariants to compute the reduction (see \eqref{pp24} below).
In addition, we compute the reduced invariants of several links; see Section \ref{ss51}.

Throughout this section, we assume that the prime power $q =p^d$ is generic and odd, and fix a non-square number $r_0 \in \F_q$.

\subsection{Review of quandle cocycle invariants }\label{ss50}
First, we review some quandles and colorings. For $r \in \F_q^{\times }$, let $X_r$ be the quotient of $\F_q \times \F_q\setminus\{ (0,0)\}$ subject to
the relation $(x,y)\sim (-x,-y)$ for any $x,y \in \F_q. $ The order of $X_r$ is $(q^2-1)/2$.
Define the binary map $\lhd : X_r \times X_r \to X_r$ by
\[ (a,b) \lhd (c,d) = (a,b) \begin{pmatrix}
1+rcd & rd^2 \\
-rc^2 & 1-rcd \\
\end{pmatrix} .\]
The pair $(X_r, \lhd)$ is sometimes called {\it the parabolic quandle} (see Example 3.15 in \cite{Nos17}).
For a link diagram $D \subset \R^2$ of a link $L$, {\it an $X_r$-coloring} is a map $\mathcal{C}:\{ \textrm{arcs of }D\} \to X_r$ satisfying the condition on the left diagram of Fig.\ref{fig.color} at each crossing of $D$.
We further define a {\it shadow coloring} to be a pair of an $X_r$-coloring $\CC $ and a map $\lambda $ from the complementary regions of $D$ to $X_r$ such that,
if the regions $R$ and $R ' $ are separated by an arc $\alpha$ as shown on the right diagram of Fig.\ref{fig.color}, the equality $\lambda(R ) \lhd \CC (\alpha )= \lambda (R ')$ holds and the unbounded region is assigned by $(1,0) \in X_r$.
Let $\col_{X_r}(D )$ denote the set of shadow colorings of $D$, and let $\col(D)$ be $\col_{X_{1}}(D) \cup \col_{X_{r_0}}(D)$.
Then, there is a bijection (see, e.g., \cite[Example 3.16]{Nos17})
\begin{equation*}\label{eqq1} \mathcal{F}: \col(D) \stackrel{1:1}{\longleftrightarrow} \Para(L,q).
\end{equation*}
\begin{figure}[htpb]
\begin{center}
\includegraphics[width=100mm]{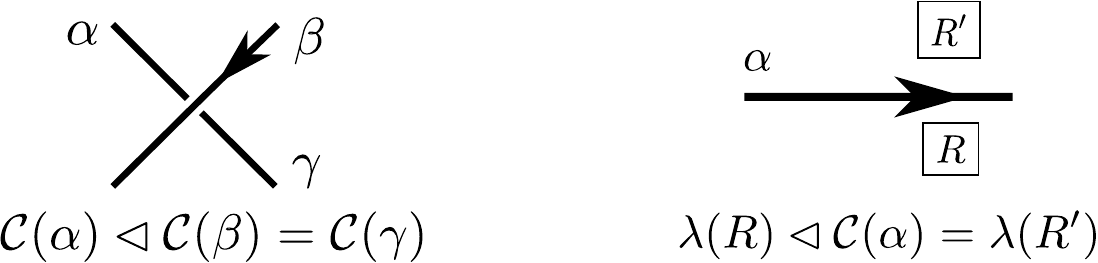}
\end{center}
\caption{Coloring conditions at each crossing and around arcs.
\label{fig.color}}
\end{figure}

Next, we briefly review the quandle (co)-homology.
Let us construct a complex by setting the free $\Z$-module $ C_n^R(X_r) $ spanned by $( x_1, \dots , x_n) \in X_r^n$, 
and let its boundary $\partial_n^{R}( x_1, \dots , x_n) \in C_{n-1}^{R }(X_r)$ be
$$ \sum_{2 \leq i \leq n} (-1)^i\bigl(( x_1, \dots,x_{i-1},x_{i+1},\dots,x_n) - (x_1 \lhd x_i,\dots,x_{i-1}\lhd x_i,x_{i+1},\dots,x_n)\bigr).$$
The composite $\partial_{n-1}^R \circ \partial_n^R $ is known to be zero. The pair $(C_*^R(X_r), \partial^R_*)$ is called the {\it rack complex}.
Let $C^D_n (X_r)$ be a submodule of $C^R_n (X_r)$ generated by $n$-tuples $(x_1, \dots ,x_n)$
with $x_i = x_{i+1}$ for some $ i \in \{1, \dots , n-1\}$.
Because $ \partial_n^R (C^D_n (X_r) ) \subset C^D_{n-1} (X_r)$, we can define a complex $\bigl( C^Q_* (X_r), \partial_* \bigr) $ as the quotient complex $C^R_* (X_r) /C^D_* (X_r)$. The homology $H^Q_n (X_r) $ is called {\it the quandle homology} of $X_r$.
If the prime power $q = p^d$ is generic, then (see \cite[Theorem C.7{}]{Nos17})
\begin{equation*}\label{eqq2} H_2^Q( X_r) \cong (\Z/p)^{d}, \ \ \ \ \ H_3^Q( X_r) \cong \Z/ (q^2-1) \oplus (\Z/p)^{d(d+1)/2}.
\end{equation*}
Dually, for an abelian group $A$, we can define the cohomology groups $H_R^n (X_r;A)$ and $H_Q^n (X_r;A) $ in terms of the coefficient $A$.
For example, any cohomology 3-class in $H_Q^3(X_r;A )$ is represented by a map $\psi: (X_r)^3 \to A$ satisfying
\[ \psi( x ,z,w)-\psi(x,y,w)+\psi(x,y,z)=\psi( x\lhd y,z,w)-\psi(x\lhd z,y\lhd z,w)+\psi( x\lhd w,y\lhd w,z \lhd w ),\]
\begin{equation*}\label{eqq3}
\psi(x, x,y)=\psi(x,y,y)=0,
\end{equation*}
for any $x,y,z,w\in X_r.$
Such a 3-cocycle $\psi$ is called a \emph{quandle 3-cocycle of} $X_r$.

We now briefly review the (shadow) quandle cocycle invariants \cite{CKS01}.
Let $D$ be a diagram of a link $L$ and $\sh \in \col_{X_r}(D) $ be a shadow coloring.
For the crossing $\tau$ shown in Fig.\ref{fig.color43},
we define the {\it weight} of $\tau $ to be $\epsilon_{\tau} (x,y,z) \in C_3^Q(X_r )$, where $\epsilon_{\tau} \in \{ \pm 1\}$ is the sign of $\tau$ according to Fig.\ref{fig.color43}.
Then, {\it the fundamental class of $\sh$} is defined to be $\sum_{\tau}\epsilon_{\tau} (x,y,z) \in C^Q_3(X_r) $; this is a 3-cycle.
We denote the homology class in $ H_3^Q( X_r )$ by $[\sh]$.
As a corollary of \cite{Nos15} and \cite[Corollary 6.20]{Nos17}, there is a surjective homomorphism $\mathcal{I}:H_3^Q( X_r ) \twoheadrightarrow H_3( \mathrm{SL}_2(\F_q))$ such that
\begin{equation}\label{pp24} \mathcal{I}([\sh ]) = \overline{\mathcal{F}(\sh)}_*([ B_L^p])\in H_3( \mathrm{SL}_2(\F_q)) ,\end{equation}
for any $\sh \in \col(D) $. To conclude, the formal sum $\sum_{ \sh \in \col_{X_r}(D) } 1_{\Z} [\sh ] \in \Z[H_3^Q( X_r ) ]$ without $p$-torsion is
equivalent to the parabolic DW invariant of the link $L$ because $p$ annihilates $\mathrm{Ker} \mathcal{I}$.
\begin{figure}[htpb]
\begin{center}
\includegraphics[width=60mm]{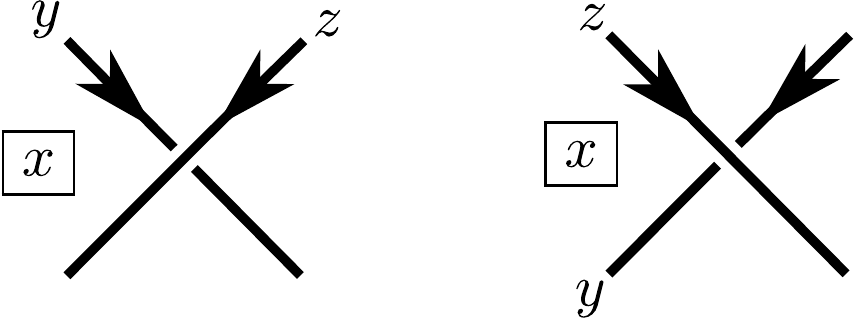}
\end{center}
\vskip -1.737pc
\caption{Positive and negative crossings with $X_r$-colors.
\label{fig.color43}}
\end{figure}

However, it is generally difficult to perform this evaluation in the homology $H_3^Q( X_r ) $.
As usual, a quandle 3-cocycle $\psi \in C^3_Q(X; A) $ and the pairing $\langle \psi, [\sh ]\rangle \in A $ are commonly considered;
this pairing is often called the {\it quandle cocycle invariant} \cite{CKS01}.
In conclusion, to compute the parabolic DW invariants \eqref{def1}, it is crucial to find an explicit formula for $\psi$.

\subsection{Quandle 3-cocycles from the Bloch group }\label{ss51}
In the previous section, using qundle $3$-cocycles, we introduced a procedure for evaluating a reduction of the parabolic DW invariant.
While it is generally difficult to find non trivial 3-cocycles, the purpose of this section is to describe a quandle 3-cocycle $\psi$ with $A=\check{\mathcal{P}} (\F_q)$; see Proposition \ref{prop7}.

First, we review another complex and a chain map defined in \cite{IK14}.
Let $Y$ be a set acted on by $\mathrm{SL}_2(\F_q)$. For example, if $Y$ is either $X_r $ or the projective space $\mathbb{P}^1(\F_q)=\F_q \cup \{ \infty\}$, then $\mathrm{SL}_2(\F_q)$ has a right action on $Y$.
Consider $C_n^{\Delta}(Y)$ to be the free $\Z$-module generated by $(n+1)$-tuples $(y_0, y_1, \dots, y_n) \in Y^{n+1} $, and define the differential map by
\vskip -0.55pc
\[ \partial_n^{\Delta} :C_n^{\Delta }(Y)\to C_{n-1}^{\Delta }(Y), \ \ \ \ \ \ \ \ \ ( y_0,\dots,y_n ) \mapsto \sum_{t=0}^n \ (-1)^t ( y_0,\dots, y_{t-1}, y_{t+1} ,\dots,y_n ) .\]
Let us define $C_*^{\Delta}(Y)_{\mathrm{SL}_2(\F_q)} $ to be the quotient complex $ C_*^{\Delta}(Y) \otimes_{\Z[\mathrm{SL}_2(\F_q)]}\Z .$
This quotient has been extensively studied, e.g., in \cite[Section 2]{Hut13}.

Next, we introduce a chain map $\varphi_* : C_*^R(X_r) \to C_*^{\Delta}(X_r)_{\mathrm{SL}_2(\F_q)}$ as follows.
Let $p_0$ be $(1,0) \in X_r$ and let $\varphi_0$ be the zero map. Furthermore, define $\varphi_n$ with $n \leq 3$ by
\begin{eqnarray*}\label{haa3i}
\varphi_2(a,b)&=&(p_0,a,b)-(p_0,a\lhd b ,b), \label{haa3wwi} \\
\nonumber \varphi_3(r,a,b)&=&(p_0,r,a,b)-(p_0,r\lhd a ,a,b)-(p_0,r\lhd b ,a\lhd b,b)+ (p_0,(r\lhd a) \lhd b ,a\lhd b,b) .
\end{eqnarray*}
Because we do not use any chain maps of higher degree, we omit the definition (see \cite{IK14} for details).
The map $\varphi_*$, which is often called the {\it Inoue-Kabaya chain map},  induces a homomorphism
$H_3^Q(X_r ) \to H_3^{\Delta}(X_r)_{\mathrm{SL}_2(\F_q)}$; see \cite[Theorem 3.4]{IK14}.
Thus, if we can find a suitable group $A$ and a 3-cocycle $\kappa$ of $C_*^{\Delta}(X_r)_{\mathrm{SL}_2(\F_q)} $,
we obtain a quandle 3-cocycle as the pullback $\varphi_3^*(\kappa)$.

Let us review the Bloch group of the finite field $\F_q $.
Let $\mathcal{P}(\F_q)$ be the abelian group presented by generators $[x]$ with $x \in \F_q \setminus \{ 0,1\}$ subject to the relations
\begin{equation*}\label{gokaku}
[x]-[y]+ [\frac{y}{x}] - [\frac{1-x^{-1}}{1-y^{-1}}] + [\frac{1-x}{1-y}]=0 , \ \ \ \ x \neq y \in \F_q \setminus \{ 0,1 \}. \end{equation*}
Let $\tilde{\wedge}^2 \F_q^{\times}$ denote the quotient of the multiplicative group $ \F_q^{\times}\otimes_{\Z} \F_q^{\times}$
by the subgroup generated by all $a \otimes b+b\otimes a$. It is easy to check $\tilde{\wedge}^2 \F_q^{\times} \cong \Z/2 $.
Consider the canonical homomorphism $\mathcal{P}(\F_q) \to \tilde{\wedge}^2 \F_q^{\times} $ that sends $[x]$ to $x \wedge (1-x)$.
The kernel is called \emph{the Bloch group}, and is denoted by $\mathcal{B}(\F_q)$; this kernel is isomorphic to $\Z/((q+1)/2)$ (see, e.g., \cite[Lemma 7.4]{Hut13}).
If $q \leq 200$, an explicit expression of the isomorphism $\mathcal{B}(\F_q) \cong \Z/((q+1)/2)$  can easily be obtained with the help of a computer program; see \cite[Appendix A]{Kar1} for $p=7,11,13$.
Moreover, let $\check{\mathcal{P}}(\F_q)$ be the quotient group of $\mathcal{P}(\F_q)$ given by the following relations:
\begin{equation*}
[x] = [1-\frac{1}{x}] = [\frac{1}{1-x}] =-[\frac{1}{x}]=-[\frac{x-1}{x}]=-[1-x], \qquad x \in \F_q \setminus \{ 0,1 \}.
\end{equation*}
Consider the projection $\mathcal{P}(\F_q)\to \check{\mathcal{P}}(\F_q)$ and denote the image of $\mathcal{B}(\F_q)$ by $\check{\mathcal{B}}(\F_q)$.
When $q$ is odd, the following isomorphisms hold (see \cite[Theorem 4.4]{Ohtuki} for details):
\begin{equation*}\label{reduced_bloch}
\check{\mathcal{B}}(\F_q) \cong \left\{
\begin{array}{lll}
\Z /(\frac{q+1}{2}) \Z, &\text{if } q \equiv 1, 9 \quad &\text{mod } 12,\\
\Z /(\frac{q+1}{4}) \Z, &\text{if } q \equiv 3, 7 \quad &\text{mod } 12,\\
\Z /(\frac{q+1}{6}) \Z, &\text{if } q \equiv 5 \quad &\text{mod } 12,\\
\Z /(\frac{q+1}{12}) \Z, &\text{if } q \equiv 11 \quad &\text{mod } 12.
\end{array}
\right.
\end{equation*}
In particular, if $q \equiv 1, 9 \text{ mod } 12$, we can naturally identify $\mathcal{B}(\F_q)$ with $\check{\mathcal{B}}(\F_q)$.

Next, we construct a specific homomorphism $H_3^{\Delta}(X_r )_{\SL_2(\F_q)} \to \check{\mathcal{P}}(\F_q)$.
For distinct points $x, y, z$ of $\mathbb{P}^1(\F_q)$, we define
\[ \phi(x,y,z):= \left\{
\begin{array}{ll}
(z-x)(x-y)(z-y)^{-1}, &\textrm{if } x,y,z \neq \infty, \\ (y-z)^{-1},&\textrm{if } x= \infty , \\ z-x,& \textrm{if } y= \infty , \\ x-y,& \textrm{if } z= \infty,
\end{array}
\right.\]
and define a homomorphism $\Phi: C_3^{\Delta}(\mathbb{P}^1(\F_q))_{\SL_2(\F_q)} \to \check{\mathcal{P}}(\F_q)$ by
\begin{equation*}
\Phi(x_1,x_2,x_3,x_4)=
\left\{
\begin{array}{ll}
\left[ \frac{\phi ( x_1,x_2,x_4)}{ \phi(x_1,x_2,x_3)} \right], &\textrm{if } x_i \neq x_j \textrm{ for all distinct } i \textrm{ and } j,\\
0, & \text{otherwise.}
\end{array}
\right.
\end{equation*}
Because $\Phi(\mathrm{Im}(\partial_4^{\Delta})) = \{ 0\}$ according to  \cite[Lemma 4.2]{Kar1} and $\mathrm{Im} \Phi \subset \check{\mathcal{B}} (\F_q)$, the map $\Phi$ induces a homomorphism $\Phi_* : H_3^\Delta (\mathbb{P}^1 (\F_q) )_{\SL_2(F_q)} \to \check{\mathcal{B}}(\F_q) $.
In summary, we have the following:
\begin{Proposition}\label{prop7}
Let $\mathrm{proj} : X_r \to \mathbb{P}^1(\F_q)$ be the canonical projection. The following composite map is a quandle 3-cocycle:
\[\psi_{ r } : H_3^Q(X_r) \stackrel{\varphi_3}{\longrightarrow} H_3^{\Delta}(X_r)_{\mathrm{SL}_2(\F_q)} \xrightarrow{\mathrm{proj}_*} H_3^{\Delta}(\mathbb{P}^1(\F_q))_{\mathrm{SL}_2(\F_q)} \stackrel{\Phi_*}{\longrightarrow} \check{ \mathcal{B}}(\F_q)\]
\end{Proposition}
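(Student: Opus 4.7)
The plan is to verify at the chain level that the composite $\Phi \circ \proj_* \circ \varphi_3 \colon C_3^R(X_r) \to \check{\mathcal{P}}(\F_q)$ (i) satisfies the rack cocycle identity, (ii) vanishes on the degenerate submodule $C_3^D(X_r)$, so that it descends to a 3-cocycle on $C_*^Q(X_r)$, and (iii) sends $3$-cycles to elements of $\check{\mathcal{B}}(\F_q) \subset \check{\mathcal{P}}(\F_q)$. Together, (i)--(iii) imply that this chain-level map is a genuine quandle $3$-cocycle and that its induced homomorphism on $H_3^Q(X_r)$ agrees with the composite $\psi_r$ in the statement, with image in $\check{\mathcal{B}}(\F_q)$.

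For (i), I invoke the full chain-map property of $\varphi_*$ up to degree $4$ from \cite[Theorem 3.4]{IK14} (the excerpt writes out $\varphi_n$ only for $n \le 3$), combined with the cocycle property $\Phi \circ \partial_4^\Delta = 0$ from \cite[Lemma 4.2]{Kar1} and the $\SL_2(\F_q)$-equivariance of $\proj$:
\[
\Phi \circ \proj_* \circ \varphi_3 \circ \partial_4^R
= \Phi \circ \proj_* \circ \partial_4^\Delta \circ \varphi_4
= (\Phi \circ \partial_4^\Delta) \circ \proj_* \circ \varphi_4
= 0.
\]
Item (iii) follows by the same chain-map pattern in the other direction: if $\sigma \in C_3^R(X_r)$ is a cycle, then $\proj_*(\varphi_3(\sigma))$ is a cycle in $C_3^\Delta(\mathbb{P}^1(\F_q))_{\SL_2(\F_q)}$, and \cite[Lemma 4.2]{Kar1} then pins $\Phi$ of any such cycle inside $\check{\mathcal{B}}(\F_q)$.

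The one place that requires a genuine direct computation is (ii). Substituting the quandle axiom $x \lhd x = x$ into the four-term formula for $\varphi_3$ immediately yields $\varphi_3(x,x,y) = 0$, since the first and second summands become equal with opposite signs, as do the third and fourth. For the other degeneracy case $\varphi_3(x,y,y)$, however, the substitution $y \lhd y = y$ leaves four summands all of the form $(p_0, \ast, y, y)$, whose alternating sum does not vanish in $C_3^\Delta(X_r)_{\SL_2(\F_q)}$. The required cancellation must instead happen downstream, and it is exactly what the definition of $\Phi$ supplies: every simplex with two coordinates equal in $\mathbb{P}^1(\F_q)$ is sent to $0$ by its ``otherwise'' clause, so each of the four summands of $\proj_*(\varphi_3(x,y,y))$ is individually annihilated by $\Phi$. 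I expect this second degeneracy case to be the main subtlety; the temptation is to try to force cancellation at the level of $\varphi_3$ alone, whereas one must instead invoke the degenerate-simplex behaviour of $\Phi$.
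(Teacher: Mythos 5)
Your argument is correct and takes essentially the same route as the paper, which proves the proposition simply by assembling \cite[Theorem 3.4]{IK14} (the Inoue--Kabaya map descends to quandle homology), the $\SL_2(\F_q)$-equivariance of $\proj$, and \cite[Lemma 4.2]{Kar1} together with the containment of the image of $\Phi$ on cycles in $\check{\mathcal{B}}(\F_q)$. Your explicit chain-level check of the degenerate case --- that $\varphi_3(x,x,y)=0$ on the nose while $\varphi_3(x,y,y)$ survives as a sum of simplices with repeated coordinates and is only annihilated by the ``otherwise'' clause of $\Phi$ --- is a correct and useful unpacking of what the paper delegates to the citation of \cite{IK14}.
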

In conclusion, because we can specifically describe $\check{\mathcal{P}}(\F_q)$ and $\check{\mathcal{B}}(\F_q)$ with generators,
we obtain a detailed presentation of the quandle 3-cocycle $\psi_r$.
This leads to the following:
\begin{Theorem}\label{prop888}
Suppose $q$ is a generic prime power.
Let $\mathrm{BW} : H_3 (\SL_2 (\F_q)) \to \mathcal{B} (\F_q)$ be the Bloch-Wigner map, and let $\widehat{\mathrm{BW}}$ be the composite of $\mathrm{BW}$ and the projection $\mathcal{B}(\F_q) \twoheadrightarrow \check{\mathcal{B}}(\F_q)$.
Then, there is an isomorphism $\sigma : \check{\mathcal{B}}(\F_q) \to \check{\mathcal{B}}(\F_q)$ such that
\[
\widehat{\mathrm{BW}}( \DW_q (L) )
=
\sum_{ \sh \in \col_{1}(D) } \sigma (\langle \psi_1, [\sh ]\rangle)
+
\sum_{ \sh \in \col_{r_0}(D) } \sigma (\langle \psi_{r_0}, [\sh ]\rangle) \in \Z[\check{\mathcal{B}}(\F_q)].
\]
\end{Theorem}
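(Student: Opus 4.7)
The plan is to unfold the definition of the parabolic DW invariant, translate the sum over parabolic representations into a sum over shadow colorings via the bijection $\mathcal{F}$, and then reduce the theorem to the assertion that $\widehat{\mathrm{BW}} \circ \mathcal{I}$ and $\psi_r$ agree as homomorphisms $H_3^Q(X_r) \to \check{\mathcal{B}}(\F_q)$ up to an automorphism of the target.

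First I would apply $\widehat{\mathrm{BW}}$ term-by-term to Definition \ref{maindef}, using its $\Z$-linearity, to get
\[
\widehat{\mathrm{BW}}(\DW_q(L)) = \sum_{f \in \Para(L,q)} 1_\Z\, \widehat{\mathrm{BW}}\bigl(\overline{f}_*([B_L^p])\bigr).
\]
The bijection $\mathcal{F}: \col(D) \to \Para(L,q)$ together with formula (\ref{pp24}) then rewrites the right-hand side as
\[
\sum_{\sh \in \col_{X_1}(D)} 1_\Z\, \widehat{\mathrm{BW}}(\mathcal{I}([\sh])) \;+\; \sum_{\sh \in \col_{X_{r_0}}(D)} 1_\Z\, \widehat{\mathrm{BW}}(\mathcal{I}([\sh])).
\]
Hence it suffices to exhibit a single isomorphism $\sigma : \check{\mathcal{B}}(\F_q) \to \check{\mathcal{B}}(\F_q)$ satisfying $\widehat{\mathrm{BW}} \circ \mathcal{I} = \sigma \circ \psi_r$ on $H_3^Q(X_r)$ for both $r = 1$ and $r = r_0$.

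To establish this equality, I would compare the two compositions at the simplicial level. By Proposition \ref{prop7}, $\psi_r = \Phi_* \circ \mathrm{proj}_* \circ (\varphi_3)_*$. On the other side, the Bloch--Wigner map admits a classical cross-ratio description (see, e.g., \cite{Hut13}): after choosing an auxiliary point of $\mathbb{P}^1(\F_q)$, it is computed by the very cross-ratio formula that defines $\Phi$, up to a conventional sign or normalization. The Inoue--Kabaya chain map $\varphi_*$ is precisely the device that lifts $\mathcal{I}$ through $H_3^{\Delta}(X_r)_{\SL_2(\F_q)} \to H_3(\SL_2(\F_q))$ \cite[Theorem 3.4]{IK14}. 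Composing with $\mathrm{proj}_*$ lands in $H_3^{\Delta}(\mathbb{P}^1(\F_q))_{\SL_2(\F_q)}$, where both the classical Bloch--Wigner cross-ratio and Karuo's map $\Phi$ \cite[Lemma 4.2]{Kar1} are defined on the same class; they therefore agree up to an automorphism of the cyclic group $\check{\mathcal{B}}(\F_q)$.

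The main obstacle is verifying that the discrepancy between the two cross-ratio conventions is given by a single automorphism $\sigma$ that serves both values $r \in \{1, r_0\}$ uniformly. Here the key point is that the projection $X_r \to \mathbb{P}^1(\F_q)$ quotients out the $r$-dependence of the quandle operation, so after passing to $H_3^{\Delta}(\mathbb{P}^1(\F_q))_{\SL_2(\F_q)}$ the two maps depend only on the fixed base point $p_0 = (1,0)$ in the definition of $\varphi_*$ and on the conventions of $\mathrm{BW}$ and $\Phi$, none of which involve $r$. Since $q$ is generic and odd, $\check{\mathcal{B}}(\F_q)$ is a cyclic group and both homomorphisms are nontrivial on its torsion-free quotient; their ratio is therefore a well-defined unit in $\mathrm{End}(\check{\mathcal{B}}(\F_q))$, which provides the desired $\sigma$.
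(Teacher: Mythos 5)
The paper gives no explicit proof of Theorem \ref{prop888}; it is presented as a direct consequence of the constructions in Sections \ref{ss50}--\ref{ss51}, namely the bijection $\mathcal{F}$, the identity \eqref{pp24}, and Proposition \ref{prop7}. Your first two steps — applying $\widehat{\mathrm{BW}}$ termwise to Definition \ref{maindef} and converting the sum over $\Para(L,q)$ into sums over $\col_{X_1}(D)$ and $\col_{X_{r_0}}(D)$ via $\mathcal{F}$ and \eqref{pp24} — are exactly the intended reduction, and correctly isolate the real content of the theorem as the identity $\widehat{\mathrm{BW}}\circ\mathcal{I}=\sigma\circ\psi_r$ on $H_3^Q(X_r)$ for a single $\sigma$ serving both $r=1$ and $r=r_0$.

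The gap is in your justification of that identity. First, the closing argument is not valid as stated: $\check{\mathcal{B}}(\F_q)$ is a finite cyclic group, so ``nontrivial on its torsion-free quotient'' is vacuous, and two nontrivial homomorphisms into a cyclic group need not differ by an automorphism of the target — for that you need them to have the same kernel (equivalently, the same image). Since $\mathcal{I}$ and $\widehat{\mathrm{BW}}$ are surjective, $\widehat{\mathrm{BW}}\circ\mathcal{I}$ is onto $\check{\mathcal{B}}(\F_q)$; so the theorem forces $\psi_r=\Phi_*\circ\mathrm{proj}_*\circ\varphi_3$ to be surjective as well, and this is precisely what you would need to verify (e.g.\ by evaluating $\psi_r$ on a generator of the non-$p$-torsion part of $H_3^Q(X_r)\cong\Z/(q^2-1)\oplus(\Z/p)^{d(d+1)/2}$) before the ``ratio is a unit'' step makes sense. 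Second, \cite[Theorem 3.4]{IK14} only says that $\varphi_*$ is a chain map; it does not by itself supply the commutative square relating $\mathcal{I}$, $\mathrm{proj}_*\circ\varphi_3$, and the cross-ratio description of $\mathrm{BW}$ on $H_3^{\Delta}(\mathbb{P}^1(\F_q))_{\SL_2(\F_q)}$. That compatibility is the substance of \cite{Nos15} and \cite[Corollary 6.20]{Nos17} (which the paper leans on for \eqref{pp24}), and it is where the independence of $\sigma$ from $r$ actually comes from; your heuristic that ``everything after $\mathrm{proj}_*$ is $r$-independent'' is the right intuition but needs to be anchored to those results rather than to a general claim about cross-ratio conventions.
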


\subsection{Some computational examples with quandle cocycle invariant}
Using Theorem \ref{prop888} and a computer program, we can compute $\sum_{ \sh \in \col(D) } \langle \psi_r, [\sh ]\rangle \in \Z[ \check{\mathcal{B}} (\F_p)] $ for prime knots with seven or fewer crossings.
Table \ref{thepairing} presents the computational results.
Here, we do not consider twist knots because the resulting computations have  already been presented in \cite{Kar1},
and we omit the case with $p=11$ because $\check{\mathcal{B}} (\F_{11}) = 0$.
\begin{table}[htpb]
\begin{tabular}{|l||l|l|l|l|l|}\hline
&	$p=7$ & $p=13$ & $p=17$ & $p=19$ & $p=23$ \\ \hline
$6_2$	&	$96$ & $24$ & $544 t^2+576$ & $720$ & $1056$ \\ \hline
$6_3$	&	$12$ & $24$ & $544 t^2+544 t+32$ & $36$ & $2068$ \\ \hline
$7_3$	&	$12$ & $312 t^6+336$ & $32$ & $36$ & $2068$ \\ \hline
$7_4$	&	$168 t+12$	& $312 t^6+24$ & $544 t^2+544 t+576$ & $1368 t^2+36$ & $1056$ \\ \hline
$7_5$	&	$12$ & $24$ & $32$ & $36$ & $1056$ \\ \hline
$7_6$	&	$84 t+12$	& $24$ & $32$ & $684 t^2+36$ & $1056$ \\ \hline
$7_7$	&	$12$ & $336$ & $544 t+1120$ & $684 t^3+684 t+36$ & $3080$ \\ \hline\hline
$\check{\mathcal{B}} (\F_p)$ & $\left\langle t \mid t^{2} \right\rangle$ & $\left\langle t \mid t^{7} \right\rangle$ & $\left\langle t \mid t^{3} \right\rangle$ & $\left\langle t \mid t^{5} \right\rangle$ & $\left\langle t \mid t^{2} \right\rangle$ \\ \hline
\end{tabular}
\end{table}
\begin{table}[htpb]
\begin{tabular}{|l||l|l|l|}
\hline
& $p=29$ & $p=31$ & $p=37$ \\ \hline
$6_2$ & $1624 t^3+3248 t+56$ & $60$ & $72$ \\ \hline
$6_3$ & $1624 t^3+1624 t^2+56$ & $1860 t^{7}+1860 t+60$ & $72$ \\ \hline
$7_3$ & $56$ & $1860 t+1920$ & $2664 t^2+72$ \\ \hline
$7_4$ & $56$ & $1860 t^2+3720 t+60$ & $5328 t^7+2664 t^2+72$ \\ \hline
$7_5$ & $56$ & $60$ & $2664 t^{14}+2664 t^4+72$ \\ \hline
$7_6$ & $1624 t^3+1680$ & $1860 t^4+60$ & $2664 t^5+72$ \\ \hline
$7_7$ & $1624 t^4+56$ & $1860 t^4+60$ & $5328 t^{12}+72$ \\ \hline\hline
$\check{\mathcal{B}} (\F_p)$ & $\left\langle t \mid t^{5} \right\rangle$ & $\left\langle t \mid t^{8} \right\rangle$ & $\left\langle t \mid t^{19} \right\rangle$ \\ \hline
\end{tabular}
\caption{Sum of $2 \sum_{ \sh \in \col(D) } \langle \psi_r, [\sh ]\rangle /p(p-1) \in \Z[ \check{\mathcal{B}} (\F_p)]$ for prime knots with seven or fewer crossings.}\label{thepairing}
\end{table}
Moreover, using Propositions \ref{prop888} and \ref{prop8}, we can summarize the following:
\begin{Proposition}\label{redDW}\rm{(cf. \cite[Theorem 3.1]{Kar1})}.
Let $q=7$ and $D_m$ be a diagram of $\mathcal{K}_m$.
Then $\check{\mathcal{B}}(\F_q) \cong \langle t \mid t^2 = 1 \rangle$ and the reduced DW invariant of $\mathcal{K}_m$ is computed as
\begin{equation*}
\frac{1}{21}\sum_{ \sh \in \col(D_m) } \langle \psi_r, [\sh ]\rangle = \left\{
\begin{array}{lll}
12, &\text{if } \, m \equiv 0,2,6,8,12,14,18,20 \quad &\mathrm{mod } \,\, 24,\\
12+84 t, &\text{if } \, m \equiv 3,4,9,10,11,16,17,22 \quad &\mathrm{mod } \,\, 24,\\
12+168 t , &\text{if } \, m \equiv 1,19 \quad &\mathrm{mod } \,\, 24,\\
96, &\text{if } \, m \equiv 5,15,21,23 \quad &\mathrm{mod } \,\, 24,\\
96 + 84 t, &\text{if } \, m \equiv 7,13 \quad &\mathrm{mod } \,\, 24.
\end{array}
\right.
\end{equation*}
\end{Proposition}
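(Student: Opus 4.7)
The plan is to apply Theorem \ref{prop888}, which translates the reduced DW invariant into the cocycle sum $\sum_{\sh \in \col(D_m)} \langle \psi_r, [\sh]\rangle \in \Z[\check{\mathcal{B}}(\F_7)]$ taken over $r \in \{1, r_0\}$. Since $\check{\mathcal{B}}(\F_7) \cong \Z/2$ by the $q \equiv 7 \bmod 12$ row recalled in Section \ref{ss51}, each pairing contributes either $0$ or $t$, so I only need to count how many colorings land in each class. The normalization $\frac{1}{21} = \frac{2}{q(q-1)}$ absorbs the conjugation redundancy of Remark \ref{keyrem}(2) together with the identification of $\DW_q$ with Karuo's reduced DW invariant supplied by Proposition \ref{prop8}.

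The first concrete step is to parametrize $\col(D_m)$ using the two-bridge structure of $\mathcal{K}_m$. A shadow coloring is determined by the two bridge-arc colors $a, b \in X_r$ and the fixed normalization $(1,0)$ on the unbounded region; the remaining arcs in the twist box are produced by iterated applications of $\lhd$, and closure of the box imposes a single equation of the form $a \lhd w(a,b)^m = b$ for a fixed word $w$ independent of $m$. Through the bijection $\mathcal{F} \colon \col(D_m) \leftrightarrow \Para(\mathcal{K}_m, 7)$, this becomes a matrix equation $M(a,b)^m v(a,b) = v'(a,b)$ in $\SL_2(\F_7)$. Since every semisimple element of $\SL_2(\F_7)$ lies in a conjugate of $T \cong \Z/6$ or $K \cong \Z/8$ (as in Section \ref{commet}), the power $M(a,b)^m$ depends only on $m$ modulo $\mathrm{lcm}(6,8) = 24$ on the semisimple locus; the unipotent locus, where $M(a,b)$ has order $7$, contributes only to the constant part of the invariant and can be verified separately to match the tabulated values. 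This is the structural origin of the period $24$ asserted in the statement.

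Once $m \bmod 24$ is fixed, I evaluate $\langle \psi_r, [\sh]\rangle$ via Proposition \ref{prop7} as $\Phi \circ \mathrm{proj}_* \circ \varphi_3$ applied to the fundamental $3$-cycle of $\sh$, which expresses the pairing as a sum of cross-ratios in $\check{\mathcal{P}}(\F_7)$. The defining relations of $\check{\mathcal{B}}(\F_7) \cong \Z/2$ collapse most of this sum into a short closed form depending only on the conjugacy class of $M(a,b)$ and $m \bmod 24$. Running the finite enumeration over $m \in \{0,1,\ldots,23\}$ for both $r=1$ and $r=r_0$ should then reproduce the five tabulated values. The main obstacle is the bookkeeping: one has to keep simultaneous track of the antipodal identification in the quandle $X_r$, the choice of conjugating element in $\SL_2(\F_7)$, and the cancellations in $\check{\mathcal{B}}(\F_7)$. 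In practice this is handled most cleanly by a computer-assisted case check over the 24 residue classes, with the structural periodicity argument of the previous paragraph guaranteeing that no residues are missed.
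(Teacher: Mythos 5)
Your overall framework (Theorem \ref{prop888} plus Proposition \ref{prop8} to reduce everything to the cocycle sum, then a computer-assisted finite check) is the right one, and it is essentially what the paper does. The gap is in how you justify that a \emph{finite} check suffices and, more importantly, in which finite set you check. You argue that the period in $m$ is $24=\mathrm{lcm}(|T|,|K|)=\mathrm{lcm}(6,8)$ because the relevant matrix power $M(a,b)^m$ depends only on $m$ bmod $24$ ``on the semisimple locus.'' But $\SL_2(\F_7)$ also contains elements of order $7$ and $14$ (the $\pm$\,unipotent classes), and $7 \nmid 24$; a priori the number of colorings coming from that locus --- and hence the constant term of the invariant, which is exactly the count of shadow colorings pairing to $0$ --- could depend on $m$ bmod $7$. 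Your dismissal of this locus as ``verified separately to match the tabulated values'' is circular: the tabulated values are precisely what is being proved, and if the unipotent count varied with $m$ bmod $7$ the statement could not be phrased mod $24$ at all. So checking only $m\in\{0,\dots,23\}$ is not justified by your argument; at best the element-order reasoning gives a period dividing the exponent of $\SL_2(\F_7)$, namely $168$, and even that requires spelling out that each coloring's existence and pairing value really are functions of a fixed seed and of $M(\text{seed})^m$ alone.

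The paper avoids this entirely: it invokes Corollary \ref{periodicity}, which gives the \emph{unconditional} period $\lambda=(q^3-q)(q^2-1)=q(q^2-1)^2=16128$ for $q=7$ (proved via the concordance Lemma \ref{concor} and the vanishing of $(q^2-1)$-fold multiples in $H_3(\SL_2(\F_q))$), and then runs the computer check for all $1\le m\le 16128$; the period-$24$ pattern is an \emph{observed output} of that exhaustive check, not an input. To repair your proof, either replace your period-$24$ claim by an appeal to Corollary \ref{periodicity} and enlarge the enumeration to $m\le q(q^2-1)^2$ (note $24 \mid 16128$, so the mod-$24$ statement is then legitimately read off), or supply an honest argument that the unipotent/order-$14$ contributions are independent of $m$ --- which is a nontrivial claim you have not established.
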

\begin{proof}
It is sufficient to compute the left-hand side for $1 \leq m \leq q (q^2 -1)^2$ because of Corollary \ref{periodicity};
with the help of a computer program, we can easily show the required equality.
\end{proof}
Note that, for $q \leq 17$, we can compute the reduced DW invariant of $\mathcal{K}_m$ in the same manner.
\appendix
\def\thesection{\Alph{section}}

\section{Another definition of parabolic DW invariant}\label{app1}

Section \ref{sec2} defines the parabolic DW invariant with branched coverings.
Concerning 3-manifolds with boundaries, there are other approaches to defining 3-manifold invariants like the DW invariant using the relative group homology (see \cite{Nos, Zic, Wakui}). However,
it is not so easy to approach the (relative) fundamental class of 3-manifolds in terms of the relative group homology.

In the knot case,
we can briefly define the DW invariant with the relative group homology.
Let us review the relative group homology for groups $H \subset G$ and a commutative ring $A$.
Let $ C_{n}^{\rm gr }(G;A )$ be the free $A$-module in which the basis is formed by the elements $G^n ,$
and define the boundary map $\partial_n( g_1, \dots , g_n) \in C_{n-1}^{\mathrm{gr}}(G;A)$ by the formula
\[ ( g_2, \dots ,g_{n}) + \bigl( \sum_{ \ 1 \leq i \leq n-1}\!\! (-1)^i ( g_1, \dots ,g_{i-1}, g_{i} g_{i+1}, g_{i+2},\dots , g_n) \bigr) +(-1)^{n} ( g_1, \dots , g_{n-1}). \]
Furthermore, we define the relative complex by the mapping cone.
More precisely, $ C_{n}^{\rm gr }(G,H;A )$ is defined by $ C_{n}^{\rm gr }(G;A )\oplus C_{n-1}^{\rm gr }(H;A ) $, and the boundary map is defined by
\[ \partial_n \bigl(g_1, \dots, g_{n}, h_1, \dots, h_{n-1}) = \bigl( \partial_{n} ( g_1, \dots, g_{n}) + (-1)^n ( h_1, \dots, h_{n-1}), \ \partial_{n-1} ( h_1, \dots, h_{n-1}) \bigr).\]
Then, the relative homology is called {\it the relative group homology of $(G,H)$}, and is denoted by $H_n(G,H;A)$.
If a space pair $Y \subset X $ is an Eilenberg-MacLane space pair, the ordinary homology $H_n(X,Y;A)$ is
known to be isomorphic to $H_n(\pi_1(X),\pi_1(Y);A) $.

For example, consider the case where $X$ is a knot complement, and $Y=\partial X$ is the torus boundary.
Then, the integral third homology $ H_3(\pi_1(X),\pi_1(Y))$ is isomorphic to $\Z$, and the generator
corresponds with the fundamental class of $[X,\partial X]$.
In addition, consider a meridian $\mathfrak{m}$ and a preferred longitude $\mathfrak{l}$ in $\pi_1(X)$,
and suppose there is a homomorphism $f: \pi_1(S^3 \setminus K) \to G$ such that $\{ f (\mathfrak{m}),f(\mathfrak{l}) \}\subset H$, where $G$ is a group  and $H \subset G$ is a subgroup.
Then, we can canonically define the pushforward $f_* [X,\partial X] \in H_3(G, H)$ up to sign. Here, the sign depends on the choices of $( \mathfrak{m}, \mathfrak{l}) $.

To form a comparison with the parabolic DW invariants, we consider the special case of $G=\mathrm{SL}_2(\F_q)$ and take $H$ as the upper-triangular subgroup. 
Note that the group isomorphisms $H \cong \F_q \rtimes \Z/2 $ if $q$ is odd  and $H \cong \F_q $ if $q$ is even.
In particular, as a result of the transfer map (see \cite[\S 3.9]{Bro}), we can easily obtain $H_2(\F_q) \cong \F_q \wedge \F_q $.
In addition, when $q$ is generic, we have the long exact sequence
\begin{equation*}\label{seq6} H_3(H) \to H_3( G) \to H_3(G,H ) \stackrel{\delta}{\to} H_2(H) \to H_2(G ) =0 \ \ \ \ (\mathrm{exact}).\end{equation*}
Because $H_2(H)$ is annihilated by $q$ and $H_3( G) \cong \Z/(q^2 -1)$, we have the decomposition
\begin{equation}\label{seq7} H_3 (\mathrm{SL}_2(\F_q), H ) \cong H_3(\mathrm{SL}_2(\F_q)) \oplus H_2(H ) \cong \Z/(q^2 -1) \oplus (\F_q \wedge \F_q).\end{equation}
Denote the projection $H_3 (\mathrm{SL}_2(\F_q), H ;\Z ) \to \Z/(q^2 -1) $ by $\mathcal{P}$. 
Then, it is natural to consider the following definition:
\begin{equation}\label{def4}\mathrm{DW}'(K) = \sum_{f \in \Para(L,q) } 1_{\Z} \, \mathcal{P}(f_* [X,\partial X] ) \in \Z [\Z/(q^2 -1)] .\end{equation}
\begin{Proposition}\label{prop7}Let $L$ be a knot $K$, and suppose that $q$ is generic. With an appropriate choice of the projection $\mathcal{P}$, the definitions of the parabolic DW invariants are equal. That is, $\mathrm{DW}_q(K) =\mathrm{DW}'(K).$
\end{Proposition}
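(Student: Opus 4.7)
The plan is to establish the equality pointwise: for each $f \in \Para(L,q)$, I aim to show $\bar{f}_*([B_L^p]) = \mathcal{P}(f_*[X,\partial X])$ in $H_3(\SL_2(\F_q)) \cong \Z/(q^2-1)$, where $X = S^3 \setminus K$; summing over $f$ then yields the equality of the formal sums \eqref{def1} and \eqref{def4}. The whole argument passes through an intermediate relative class $\tilde{f}_*[E_L^p, \partial E_L^p] \in H_3(G,H)$, where $\tilde{f} \coloneqq f|_{\pi_1(E_L^p)}$ and $G = \SL_2(\F_q)$.

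First, since $\pi \colon (E_L^p,\partial E_L^p) \to (X,\partial X)$ is a $p$-fold covering of oriented pairs, we have $\pi_*[E_L^p, \partial E_L^p] = p[X,\partial X]$, and naturality of the classifying map yields
\[
\tilde{f}_*[E_L^p, \partial E_L^p] \ = \ p \cdot f_*[X,\partial X] \quad \text{in } H_3(G,H).
\]
Second, I would verify that this relative class actually lies in the image of the inclusion $\iota \colon H_3(G) \hookrightarrow H_3(G,H)$ from the splitting \eqref{seq7}. This amounts to showing $\delta(\tilde{f}_*[E_L^p, \partial E_L^p]) = 0$ for the connecting map $\delta$. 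But the meridian $\tilde{\meridian}$ of $\partial E_L^p$ corresponds to $\meridian^p$ in $\pi_1(X)$, and the parabolic condition forces $\tilde{f}(\tilde{\meridian}) = f(\meridian)^p = I_2$; meanwhile $\tilde{f}(\tilde{\longitude}) = f(\longitude) \in H$. Hence $\tilde{f}|_{\pi_1(\partial E_L^p)}$ factors through the cyclic group $\Z = \langle \tilde{\longitude}\rangle$, and $H_2(\Z) = 0$ forces the fundamental class $[\partial E_L^p]$ to vanish in $H_2(H)$.

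Third, writing $B_L^p = E_L^p \cup V$ with $V$ a solid torus attached along $\tilde{\meridian}$, excision gives $H_3(B_L^p) \cong H_3(B_L^p, V) \cong H_3(E_L^p, \partial E_L^p)$ (using $H_3(V) = H_2(V) = 0$), with $[B_L^p] \leftrightarrow [E_L^p, \partial E_L^p]$. The factorization $\tilde{f} = \bar{f} \circ q$ through the quotient $q \colon \pi_1(E_L^p) \twoheadrightarrow \pi_1(B_L^p)$ then identifies $\tilde{f}_*[E_L^p, \partial E_L^p] = \iota(\bar{f}_*[B_L^p])$ in $H_3(G,H)$. Combining the three steps,
\[
\iota(\bar{f}_*[B_L^p]) \ = \ p \cdot f_*[X,\partial X] \quad \text{in } H_3(G,H).
\]
Since $q^2 - 1 \equiv -1 \pmod p$, we have $\gcd(p, q^2-1) = 1$, so $p$ is invertible on $H_3(G) \cong \Z/(q^2-1)$; choosing $\mathcal{P}$ to be $p^{-1}$ times the natural left-inverse of $\iota$ then delivers $\mathcal{P}(f_*[X,\partial X]) = \bar{f}_*[B_L^p]$, as required.

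The main obstacle is the second step: certifying that $\tilde{f}_*[E_L^p, \partial E_L^p]$ lies cleanly in the $H_3(G)$-summand of \eqref{seq7}. Beyond the connecting-map argument above, one must verify that $\iota \colon H_3(G) \to H_3(G,H)$ is actually injective, i.e., that $H_3(H) \to H_3(G)$ vanishes on the relevant components; this hinges on the coprimality that $|H_2(H)|$ is a $p$-power while $|H_3(G)| = q^2-1$ is prime to $p$, but pinning it down rigorously requires some care with the Lyndon--Hochschild--Serre spectral sequence for $H = \F_q \rtimes \Z/2$. One must also ensure the projection $\mathcal{P}$ is a well-defined homomorphism independent of representation-dependent choices so that it may be taken uniformly across all $f \in \Para(L,q)$ in the definition \eqref{def4}.
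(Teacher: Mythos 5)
Your proposal is correct and follows essentially the same route as the paper: identify $[B_K^p]$ with $[E_K^p,\partial E_K^p]$ via excision, use that the $p$-fold covering pushes $[E_K^p,\partial E_K^p]$ to $p[X,\partial X]$, and absorb the factor $p$ into the choice of $\mathcal{P}$ using $\gcd(p,q^2-1)=1$. Your second step (checking via the connecting map and $f(\meridian)^p=I_2$ that the relative class lands in the $H_3(G)$-summand of \eqref{seq7}) is a detail the paper leaves implicit, and is a worthwhile addition.
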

\begin{proof} Let $X$ be $S^3 \setminus K $.
Consider the inclusion $E_K^p \subset B_K^p$, which induces the isomorphisms
\[ H_3(B_K^p) \cong H_3(B_K^p, B_K^p \setminus E_K^p)\cong H_3 (E_K^p, \partial E_K^p) \] according to the homology long exact sequence and the excision axiom.
Denote the composite of the isomorphism by $\delta$. By definition, $\delta([B_p^K])= \pm [ E_K^p, \partial E_K^p ] $, leading to
$ \mathcal{P} ( \mathrm{res}(f)_* ([ E_K^p, \partial E_K^p ] ))= \overline{f}_* ( [B_p^K] )$ for some choice of $\mathcal{P}$.
Moreover, let $\mathrm{cov}: E_K^p \to S^3 \setminus K $ be the cyclic covering.
Then, $ \mathrm{cov}_* [ E_K^p , \partial E_K^p ] = p [X ,\partial X ] \in H_3(X,\partial X;\Z) \cong \Z$.
Thus, for any parabolic representation $f: \pi_1(S^3 \setminus K) \to \mathrm{SL}_2(\F_q)$,
\[ p \mathcal{P} (f_*( [X ,\partial X ] ) )= \mathcal{P} \circ \mathrm{res}(f)_* ([ E_K^p, \partial E_K^p ] )= \bar{f}_*([B_p^K]) \in H_3( \mathrm{SL}_2(\F_q) ) . \]
Replacing $ p \mathcal{P} $ by $\mathcal{P} $, we obtain the required equality from the definitions of the invariants.
\end{proof}
\begin{Remark}In \eqref{def4}, we omit the information on the summand $\F_q \wedge \F_q $ in \eqref{seq7}.
However, the following discussion demonstrates that this information is largely determined by the pair $( \mathfrak{m}, \mathfrak{l}) $.
Because $\pi_1( \partial X)=\Z \times \Z$, $ \mathfrak{m}$ and $\mathfrak{l} $ can be regarded as 1-cycles in the group homology $H_1( \Z \times \Z)$, and the cross product $ \mathfrak{m} \times \mathfrak{l} $ generates $H_2( \Z \times \Z) \cong \Z. $
Moreover, from the delta map of the long exact sequence, we can deduce that  $H_3( X,\partial X) \cong H_2(\partial X) =H_2( \Z \times \Z) .$
Thus, $ \delta ( f_*[X,\partial X] )= \pm f_*( \mathfrak{m} \times \mathfrak{l} )= \pm f_*( \mathfrak{m} ) \times f_*( \mathfrak{l})$.
Because the delta map $\delta$ can be regarded as the projection $ H_3 (\mathrm{SL}_2(\F_q), \F_q) \to \F_q \wedge \F_q $ in \eqref{seq7},
$ \delta ( f_*[X,\partial X] )$ is almost determined by the pair $( \mathfrak{m}, \mathfrak{l}) $ as required.
\end{Remark}

\subsection{Comparison with reduced DW invariant of Karuo}
Next, when $q$ is generic, Proposition \ref{prop8} states that our invariants in \eqref{def1} and \eqref{def4} is a lift of
the reduced DW invariant of Karuo \cite{Kar212, Kar1}.

First, we briefly review the reduced invariant.
Let $\nu K$ be an open tubular neighborhood of $K$ in $S^3$.
Because $\pi_1 (\partial (S^3 \setminus \nu K )) = \Z \times \Z$,
every essential simple closed curve $C$ on $\partial (S^3 \setminus \nu K )$ is presented by $(a, b) \in \Z \times \Z$, where $a$ and $b$ are coprime.
Let us call $C$ {\it the $(a, b)$-curve}. Let $M_{(a,b)}(K)$ be the closed 3-manifold obtained from $S^3 \setminus \nu K$ by the Dehn filling along $C$.
Then, for any parabolic representation $f : \pi_1(S^3 \setminus \nu K) \to \mathrm{SL}_2(\F_q) $,
the kernel of $f$ is non-trivial, because $ \pi_1(S^3 \setminus \nu K) $ is of infinite order and $ |\mathrm{SL}_2(\F_q) |< \infty$.
Thus, we can choose a coprime pair $(a,b ) \in \Z$ such that $ f ( C) $ is trivial; therefore, $f$ induces
$ f_{a,b}: \pi_1(M_{(a,b)}(K)) \to \mathrm{SL}_2(\F_q)$.
However, the pushforward $f_*([M_{(a,b)}(K)]) \in H_3(\mathrm{SL}_2(\F_q) )$ depends on the choice of $(a,b)$.

To obtain knot invariants, Karuo considered the Bloch group $\mathcal{B}(\F_q)$ and the Bloch-Wigner map $ {\mathrm{BW}} : H_3(\mathrm{SL}_2(\F_q) ) \to \mathcal{B}(\F_q)$; see \cite[Section 2]{Kar212} for the definitions.
Recall the groups $\mathcal{B}_q (\F_q)$ and $\check{\mathcal{B}}_q (\F_q)$ and the map $\widehat{\rm BW}$ from Section 3.2.
Karuo showed that the quotient $\widehat{\rm BW} \circ f_*([M_{(a,b)}(K)])$ in $\check{\mathcal{B}}(\F_q) $ is independent of the choice of $(a,b)$.
Then, {\it the reduced DW invariant} $\widehat{\rm DW}(K, \F_q) $ of $K$ is defined as the sum of the pushforwards $\widehat{\rm BW} \circ f_*([M_{(a,b)}(K)])$ over all parabolic representations $f$ in $\Z[ \check{\mathcal{B}}(\F_q) ]$.
Note that while the reduced DW invariant in \cite{Kar212, Kar1} is deﬁned as a formal sum of all conjugacy classes of parabolic representations with respect to the general linear group $\GL_2 (\F_q)$, this paper deﬁnes the reduced DW invariant as a formal sum of all parabolic representations.

Using a similar argument as in the proof of Proposition \ref{prop7}, we can show the following.
\begin{Proposition}\label{prop8}
Let $L$ be a knot $K$.
Suppose $q$ is generic. With an appropriate choice of the projection $\mathcal{P}$,
$ \widehat{\rm BW} (\mathrm{DW}_q(K))= \widehat{\rm DW}(K, \F_q) \in \Z[ \check{\mathcal{B}}(\F_q) ]$.
In particular, the reduced DW invariant is a specialization of the parabolic DW invariant $\DW_q (K)$ in \eqref{def1}.
\end{Proposition}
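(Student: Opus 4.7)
The plan is to apply Proposition~\ref{prop7} to rewrite $\mathrm{DW}_q(K)$ in terms of the relative class $f_*[X,\partial X]$ with $X = S^3\setminus K$, and then to compare each summand with Karuo's $\widehat{\mathrm{BW}}((f_{a,b})_*[M_{(a,b)}(K)])$ by factoring the pushforward through the Dehn filling.

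First I would invoke Proposition~\ref{prop7} to identify each summand $\bar{f}_*[B_K^p]$ of $\mathrm{DW}_q(K)$ with $\mathcal{P}(f_*[X,\partial X])$, where $\mathcal{P}\colon H_3(\SL_2(\F_q),H)\to H_3(\SL_2(\F_q))\cong \Z/(q^2-1)$ is the projection coming from the splitting~\eqref{seq7}. For each parabolic $f$, pick a coprime pair $(a,b)$ with $f(C_{a,b})=1$, which exists because $\pi_1(X)$ is infinite while $\SL_2(\F_q)$ is finite; then $f$ descends to $f_{a,b}\colon \pi_1(M_{(a,b)}(K))\to\SL_2(\F_q)$. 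The inclusion $X\hookrightarrow M_{(a,b)}(K)$, together with excision on the attached solid torus, sends the relative fundamental class $[X,\partial X]$ to the absolute fundamental class $[M_{(a,b)}(K)]$. Translating this to group homology via the surjection $\pi_1(X)\twoheadrightarrow\pi_1(M_{(a,b)}(K))$ shows that the $H_3(\SL_2(\F_q))$-component of $f_*[X,\partial X]$ under~\eqref{seq7} is precisely $(f_{a,b})_*[M_{(a,b)}(K)]$, so with the appropriate choice of $\mathcal{P}$ we obtain
\[
\mathcal{P}(f_*[X,\partial X]) \;=\; (f_{a,b})_*[M_{(a,b)}(K)] \;\in\; H_3(\SL_2(\F_q)).
\]

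Applying $\widehat{\mathrm{BW}}$ termwise and summing over $f \in \Para(K,q)$ then gives
\[
\widehat{\mathrm{BW}}(\mathrm{DW}_q(K)) \;=\; \sum_{f} \widehat{\mathrm{BW}}((f_{a,b})_*[M_{(a,b)}(K)]) \;=\; \widehat{\mathrm{DW}}(K,\F_q),
\]
with the last equality using Karuo's theorem that $\widehat{\mathrm{BW}}((f_{a,b})_*[M_{(a,b)}(K)])$ is independent of the choice of $(a,b)$, which legitimises the per-representation choice of slope. The main obstacle is the displayed identification in the preceding paragraph: it requires tracing the quotient $\pi_1(X)\twoheadrightarrow\pi_1(M_{(a,b)}(K))$ through the mapping-cone complex defining $H_3(\SL_2(\F_q),H)$ and matching the rescaled projection $\mathcal{P}$ from the proof of Proposition~\ref{prop7} (which absorbs the factor of $p$ arising from $\cov_*[E_K^p,\partial E_K^p] = p\,[X,\partial X]$) with the pushforward of the Dehn filling's fundamental class. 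Once this compatibility is secured, the chain of equalities above together with Karuo's independence theorem yields $\widehat{\mathrm{BW}}(\mathrm{DW}_q(K)) = \widehat{\mathrm{DW}}(K,\F_q)$ as required.
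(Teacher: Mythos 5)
Your route is the one the paper intends: the paper's own proof is a single sentence deferring to the argument of Proposition~\ref{prop7}, and you carry that out by combining Proposition~\ref{prop7} with a naturality/excision comparison between $[X,\partial X]$ and the filled manifold's fundamental class, then invoking Karuo's independence theorem. The structure is right and the conclusion is reachable this way.

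One step is overstated, though. Your displayed identity $\mathcal{P}(f_*[X,\partial X]) = (f_{a,b})_*[M_{(a,b)}(K)]$ is asserted as an exact equality in $H_3(\mathrm{SL}_2(\F_q))$; since the left-hand side involves no choice of slope while the paper explicitly records that $(f_{a,b})_*[M_{(a,b)}(K)]$ \emph{does} depend on $(a,b)$, this cannot hold on the nose for every admissible $(a,b)$ with a single $\mathcal{P}$. What the commutative diagram (inclusion of the filling solid torus, excision, and naturality of the pair map to $(B\mathrm{SL}_2(\F_q),BH)$) actually gives is the equality of the two classes \emph{after} mapping into $H_3(\mathrm{SL}_2(\F_q),H)$, i.e.\ only up to the kernel of $H_3(\mathrm{SL}_2(\F_q))\to H_3(\mathrm{SL}_2(\F_q),H)$, which is the image of $H_3(H)$ (in practice, $2$-torsion coming from the centre $\{\pm I\}$ sitting inside the peripheral image). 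To finish, one must observe that this ambiguity dies under $\widehat{\mathrm{BW}}$ --- this is precisely the content of Karuo's theorem that $\widehat{\mathrm{BW}}\circ(f_{a,b})_*[M_{(a,b)}(K)]$ is slope-independent, and it is why the comparison is stated in $\check{\mathcal{B}}(\F_q)$ rather than in $H_3(\mathrm{SL}_2(\F_q))$. Since you do apply $\widehat{\mathrm{BW}}$ termwise and cite Karuo's independence result at the end, the proof closes once the intermediate equality is weakened to ``equality after applying $\widehat{\mathrm{BW}}$'' (or ``equality modulo $\mathrm{im}(H_3(H)\to H_3(\mathrm{SL}_2(\F_q)))$''); as written, the intermediate claim proves too much.
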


\section{Some computations of $\DW_{16} (L)$}\label{app2}
For $m \in \N$, let $L$ be either the $(2,2m)$-torus link $\mathcal{T}_m$ or the $2m$-twist knot $\mathcal{K}_m$.
Then, the double branched covering space $B_L^2$ is homeomorphic to a lens space.
Hence, we can apply the computational method described in Section \ref{commet} to $L$.
This appendix presents the computational results of $\DW^T_{16} (L)$ and $\DW^K_{16} (L)$ for $1\leq m \leq 50$.
Note that, for $50 < m$, the invariants can be easily calculated in a similar manner.
In this appendix, we let $\F_{16} \cong \F_2 [X]/(1 + X + X^4)$, and take $ \begin{pmatrix}X & 0 \\0 & X^{-1} \end{pmatrix}, \begin{pmatrix}0 & -1 \\1 & X \end{pmatrix} \in \SL_2 (\F_{16})$ as the generators of the subgroups $T,K \subset \SL_2 (\F_{16})$, respectively.
In addition, we identify $H_3 (T)$ with $\langle t \mid t^{15} =1 \rangle$ and $H_3 (K)$ with $\langle s \mid s^{17} =1 \rangle$.
\begin{table}[htpb]
\small
\begin{tabular}{l|c|c}
$m$ & $\DW^T_{16} (\mathcal{T}_m) / q(q+1)$ & $\DW^K_{16} (\mathcal{T}_m) / (q-1)q$ \\ \hline \hline
$1$ & $0$ & $0$ \\
$2$ & $0$ & $0$ \\
$3$ & $t^{10}$ & $0$ \\
$4$ & $0$ & $0$ \\
$5$ & $t^9+t^6$ & $0$ \\
$6$ & $t^5$ & $0$ \\
$7$ & $0$ & $0$ \\
$8$ & $0$ & $0$ \\
$9$ & $1$ & $0$ \\
$10$ & $t^{12}+t^3$ & $0$ \\
$11$ & $0$ & $0$ \\
$12$ & $t^{10}$ & $0$ \\
$13$ & $0$ & $0$ \\
$14$ & $0$ & $0$ \\
$15$ & $t^{12}+2 t^8+t^5+t^3+2 t^2$ & $0$ \\
$16$ & $0$ & $0$ \\
$17$ & $0$ & $s^{16}+s^{15}+s^{13}+s^9+s^8+s^4+s^2+s$ \\
$18$ & $1$ & $0$ \\
$19$ & $0$ & $0$ \\
$20$ & $t^9+t^6$ & $0$ \\
$21$ & $t^{10}$ & $0$ \\
$22$ & $0$ & $0$ \\
$23$ & $0$ & $0$ \\
$24$ & $t^5$ & $0$ \\
$25$ & $2$ & $0$ \\
$26$ & $0$ & $0$ \\
$27$ & $1$ & $0$ \\
$28$ & $0$ & $0$ \\
$29$ & $0$ & $0$ \\
$30$ & $t^{10}+t^9+t^6+2 t^4+2 t$ & $0$ \\
$31$ & $0$ & $0$ \\
$32$ & $0$ & $0$ \\
$33$ & $t^5$ & $0$ \\
$34$ & $0$ & $s^{16}+s^{15}+s^{13}+s^9+s^8+s^4+s^2+s$ \\
$35$ & $t^{12}+t^3$ & $0$ \\
$36$ & $1$ & $0$ \\
$37$ & $0$ & $0$ \\
$38$ & $0$ & $0$ \\
$39$ & $t^{10}$ & $0$ \\
$40$ & $t^{12}+t^3$ & $0$ \\
$41$ & $0$ & $0$ \\
$42$ & $t^5$ & $0$ \\
$43$ & $0$ & $0$ \\
$44$ & $0$ & $0$ \\
$45$ & $3 t^9+3 t^6+1$ & $0$ \\
$46$ & $0$ & $0$ \\
$47$ & $0$ & $0$ \\
$48$ & $t^{10}$ & $0$ \\
$49$ & $0$ & $0$ \\
$50$ & $2$ & $0$ \\
\end{tabular}
\caption{$\DW^T_{16} (\mathcal{T}_m) / q(q+1) \in \Z [H_3 (T)] $ and $\DW^K_{16} (\mathcal{T}_m) / (q-1)q \in \Z [H_3 (K)].$}
\label{paraDW_of_torus}
\end{table}
\clearpage
\begin{table}[htpb]
\small
\begin{tabular}{l|c|c}
$m$ & $\DW^T_{16} (\mathcal{K}_m) / q(q+1)$ & $\DW^K_{16} (\mathcal{K}_m) /(q-1)q$ \\ \hline \hline
$1$ & $t^9+t^6$ & $0$ \\
$2$ & $1$ & $0$ \\
$3$ & $0$ & $0$ \\
$4$ & $0$ & $s^{16}+s^{15}+s^{13}+s^9+s^8+s^4+s^2+s$ \\
$5$ & $t^{10}$ & $0$ \\
$6$ & $2$ & $0$ \\
$7$ & $0$ & $0$ \\
$8$ & $t^5$ & $0$ \\
$9$ & $0$ & $0$ \\
$10$ & $0$ & $0$ \\
$11$ & $3 t^9+3 t^6+1$ & $0$ \\
$12$ & $0$ & $0$ \\
$13$ & $0$ & $0$ \\
$14$ & $t^{10}$ & $0$ \\
$15$ & $0$ & $0$ \\
$16$ & $t^{12}+t^3$ & $0$ \\
$17$ & $t^5$ & $0$ \\
$18$ & $0$ & $0$ \\
$19$ & $0$ & $0$ \\
$20$ & $1$ & $0$ \\
$21$ & $t^{12}+t^3$ & $s^{14}+s^{12}+s^{11}+s^{10}+s^7+s^6+s^5+s^3$ \\
$22$ & $0$ & $0$ \\
$23$ & $t^{10}$ & $0$ \\
$24$ & $0$ & $0$ \\
$25$ & $0$ & $0$ \\
$26$ & $2 t^{14}+2 t^{11}+t^9+t^6+t^5$ & $0$ \\
$27$ & $0$ & $0$ \\
$28$ & $0$ & $0$ \\
$29$ & $1$ & $0$ \\
$30$ & $0$ & $0$ \\
$31$ & $2$ & $0$ \\
$32$ & $t^{10}$ & $0$ \\
$33$ & $0$ & $0$ \\
$34$ & $0$ & $0$ \\
$35$ & $t^5$ & $0$ \\
$36$ & $t^9+t^6$ & $0$ \\
$37$ & $0$ & $0$ \\
$38$ & $1$ & $s^{16}+s^{15}+s^{13}+s^9+s^8+s^4+s^2+s$ \\
$39$ & $0$ & $0$ \\
$40$ & $0$ & $0$ \\
$41$ & $2 t^{13}+t^{12}+t^{10}+2 t^7+t^3$ & $0$ \\
$42$ & $0$ & $0$ \\
$43$ & $0$ & $0$ \\
$44$ & $t^5$ & $0$ \\
$45$ & $0$ & $0$ \\
$46$ & $t^{12}+t^3$ & $0$ \\
$47$ & $1$ & $0$ \\
$48$ & $0$ & $0$ \\
$49$ & $0$ & $0$ \\
$50$ & $t^{10}$ & $0$ \\
\end{tabular}
\caption{$\DW^T_{16} (\mathcal{K}_m) / q(q+1) \in \Z [H_3 (T)] $ and $\DW^K_{16} (\mathcal{K}_m) / (q-1)q \in \Z [H_3 (K)].$}
\label{paraDW_of_twist}
\end{table}
\clearpage

\noindent{\bf Acknowledgments} 
I am grateful to my supervisor Takefumi Nosaka for many helpful suggestions. 
I also thank Hiroaki Karuo for valuable comments.

\bibliographystyle{amsalpha}
\nocite{*}
\bibliography{paraDW}

\providecommand{\bysame}{\leavevmode\hbox to3em{\hrulefill}\thinspace}
\providecommand{\MR}{\relax\ifhmode\unskip\space\fi MR }
% \MRhref is called by the amsart/book/proc definition of \MR.
\providecommand{\MRhref}[2]{%
  \href{http://www.ams.org/mathscinet-getitem?mr=#1}{#2}
}
\providecommand{\href}[2]{#2}
\begin{thebibliography}{Kar21b}

\bibitem[BE78]{BE78}
R.~Bieri and B.~Eckmann, \emph{Relative homology and {P}oincar\'{e} duality for
  group pairs}, J. Pure Appl. Algebra \textbf{13} (1978), no.~3, 277--319.

\bibitem[Bro94]{Bro}
K.~S. Brown, \emph{Cohomology of groups}, Graduate Texts in Mathematics,
  vol.~87, Springer-Verlag, New York, 1994.

\bibitem[CKS01]{CKS01}
J.~S. Carter, S.~Kamada, and M.~Saito, \emph{Geometric interpretations of
  quandle homology}, Journal of knot theory and its ramifications \textbf{10}
  (2001), no.~03, 345--386.

\bibitem[DW90]{DW}
R.~Dijkgraaf and E.~Witten, \emph{Topological gauge theories and group
  cohomology}, Comm. Math. Phys. \textbf{129} (1990), no.~2, 393--429.

\bibitem[Hut13]{Hut13}
K.~Hutchinson, \emph{A {B}loch-{W}igner complex for {$\mathrm{SL}_2$}}, J.
  K-Theory \textbf{12} (2013), no.~1, 15--68.

\bibitem[IK14]{IK14}
A.~Inoue and Y.~Kabaya, \emph{Quandle homology and complex volume}, Geometriae
  Dedicata \textbf{171} (2014), no.~1, 265--292.

\bibitem[Kar21a]{Kar212}
H.~Karuo, \emph{The reduced {D}ijkgraaf-{W}itten invariant of double twist
  knots in the {B}loch group of {$\Bbb F_p$}}, J. Knot Theory Ramifications
  \textbf{30} (2021), no.~7, Paper No. 2150055, 52.

\bibitem[Kar21b]{Kar1}
\bysame, \emph{The reduced {D}ijkgraaf-{W}itten invariant of twist knots in the
  {B}loch group of a finite field}, J. Knot Theory Ramifications \textbf{30}
  (2021), no.~3, Paper No. 2150014, 70.

\bibitem[Kim18]{Kim18}
Naoki Kimura, \emph{A generalization of the dijkgraaf-witten invariants for
  cusped 3-manifolds}, 2018, preprint, arXiv:1805.05130.

\bibitem[Nos15]{Nos15}
T.~Nosaka, \emph{Homotopical interpretation of link invariants from finite
  quandles}, Topology and its Applications \textbf{193} (2015), 1--30.

\bibitem[Nos17]{Nos17}
\bysame, \emph{Quandles and topological pairs}, SpringerBriefs in Mathematics,
  Springer, Singapore, 2017, Symmetry, knots, and cohomology.

\bibitem[Nos20]{Nos}
\bysame, \emph{On the fundamental 3-classes of knot group representations},
  Geom. Dedicata \textbf{204} (2020), 1--24.

\bibitem[Oht]{Ohtuki}
T.~Ohtsuki, \emph{On the bloch groups of finite fields and their quotients by
  the relation corresponding to a tetrahedral symmetry}, preprint,
  https://www.kurims.kyoto-u.ac.jp/preprint/file/RIMS1938.pdf.

\bibitem[Tak59]{Tak59}
S.~Takasu, \emph{Relative homology and relative cohomology theory of groups},
  J. Fac. Sci. Univ. Tokyo Sect. I \textbf{8} (1959), 75--110.

\bibitem[Wak92]{Wakui}
M.~Wakui, \emph{On {D}ijkgraaf-{W}itten invariant for {$3$}-manifolds}, Osaka
  J. Math. \textbf{29} (1992), no.~4, 675--696.

\bibitem[Zic09]{Zic}
C.~K. Zickert, \emph{The volume and {C}hern-{S}imons invariant of a
  representation}, Duke Math. J. \textbf{150} (2009), no.~3, 489--532.

\end{thebibliography}

\end{document}